\newtheorem{theorem}{Theorem}[section]
\newtheorem{lemma}[theorem]{Lemma}
\newtheorem{cor}[theorem]{Corollary}
\theoremstyle{definition}
\newtheorem{definition}[theorem]{Definition}
\newtheorem{example}[theorem]{Example}
\theoremstyle{remark}
\numberwithin{equation}{section}
\newcommand{\spt}{\mbox{\rm spt}}
\DeclareSymbolFont{AMSb}{U}{msb}{m}{n}
\DeclareMathSymbol{\Z}{\mathalpha}{AMSb}{"5A}
\DeclareMathSymbol{\nmid}{\mathrel}{AMSb}{"2D}
\DeclareSymbolFont{AMSb}{U}{msb}{m}{n}
\DeclareMathSymbol{\C}{\mathalpha}{AMSb}{"43}
\DeclareMathSymbol{\F}{\mathalpha}{AMSb}{"46}
\DeclareMathSymbol{\N}{\mathalpha}{AMSb}{"4E}
\DeclareMathSymbol{\Q}{\mathalpha}{AMSb}{"51}
\DeclareMathSymbol{\R}{\mathalpha}{AMSb}{"52}
\DeclareMathSymbol{\Z}{\mathalpha}{AMSb}{"5A}
\begin{document}
\newcommand{\beqs}{\begin{equation*}}
\newcommand{\eeqs}{\end{equation*}}
\newcommand{\beq}{\begin{equation}}
\newcommand{\eeq}{\end{equation}}
\newcommand\mylabel[1]{\label{#1}}
\newcommand\eqn[1]{(\ref{eq:#1})}
\newcommand\thm[1]{\ref{thm:#1}}
\newcommand\lem[1]{\ref{lem:#1}}
\newcommand\propo[1]{\ref{propo:#1}}
\newcommand\corol[1]{\ref{cor:#1}}
\newcommand\sect[1]{\ref{sec:#1}}

\title[Higher Order spt-Functions]{
Higher order spt-functions}

\author{F.~G.~Garvan}
\address{Department of Mathematics, University of Florida, Gainesville,
Florida 32611-8105}
\email{fgarvan@ufl.edu}          
\thanks{The author was supported in part by NSA Grant H98230-09-1-0051.
Most of the research for this paper was done while the author was visiting
Mike Hirschhorn at the University of New South Wales in June 2010.
The first version of this paper was submitted to the arXiv on August 6, 2010.
}


\subjclass[2010]{Primary 11P82, 11P83;
Secondary 05A17, 11F33, 33D15}

\date{October 30, 2010}  

\dedicatory{Dedicated to the memory of 
A.O.L. (Oliver) Atkin}

\keywords{Bailey's lemma, Bailey pair, Andrews's spt-function, partitions, rank moments,
crank moments, quasimodular forms, congruences} 

\begin{abstract}
Andrews' spt-function can be written as the difference between
the second symmetrized crank and rank moment functions. Using the
machinery of Bailey pairs a combinatorial interpretation is given for
the difference between higher order symmetrized crank and rank moment
functions. This implies an inequality between crank and rank
moments that was only known previously for sufficiently large $n$ 
and fixed order.
This combinatorial interpretation is in terms of a weighted sum
of partitions. A number of congruences for higher order spt-functions
are derived.
\end{abstract}

\maketitle

\section{Introduction} \label{sec:intro}

Andrews \cite{An08b} defined the function $\spt(n)$ as the number of 
smallest parts in the partitions of $n$. He related this
function to the second rank moment. He also proved some surprising congruences
mod $5$, $7$ and $13$. Namely, he showed that
\begin{equation}
\spt(n) = n p(n) - \frac{1}{2} N_2(n),
\mylabel{eq:sptid}
\end{equation}
where $N_2(n)$ is the second rank moment function and $p(n)$ is the number of 
partitions of $n$, and he proved that
\begin{align*}
\spt(5n+4) &\equiv 0 \pmod{5},\\
\spt(7n+5) &\equiv 0 \pmod{7},\\
\spt(13n+6) &\equiv 0 \pmod{13}.   
\end{align*}
As noted in \cite{Ga10a}, \eqn{sptid} can be rewritten as
$$
\spt(n) = \frac{1}{2}(M_2(n) - N_2(n)),
$$
where $M_2(n)$ is the second crank moment function. 
Rank and crank moments were introduced by  A.~O.~L.~Atkin and the author
\cite{At-Ga}. Bringmann \cite{Br08} studied analytic, asymptotic
and congruence 
properties of the generating function for the second rank moment as
a quasi-weak Maass form. Further congruence properties of Andrews'
spt-function were found by the author \cite{Ga10a}, Folsom and Ono \cite{Fo-On}
and Ono \cite{On10}.
In \cite{Ga10a}
it was conjectured that
\begin{equation}
M_{2k}(n) > N_{2k}(n),
\mylabel{eq:MNineq}
\end{equation}
for all $k \ge 1$ and $n\ge 1$. 
Here $M_{2k}(n)$ and $N_{2k}(n)$ are the $2k$-th crank and $2k$-th rank moment
functions.
For each fixed $k$, the inequality was proved for sufficiently 
large $n$ by  Bringmann, Mahlburg and Rhoades \cite{Br-Ma-Rh},
who determined the asymptotic behaviour for the difference
$M_{2k}(n)-N_{2k}(n)$ (see Section \sect{remarks}).
The first few cases of the conjecture were previously proved by
Bringmann and Mahlburg \cite{Br-Ma}.
In this paper we
prove the inequality unconditionally for all $n$ and $k$ by
finding a combinatorial interpretation for  the difference between
symmetrized crank and rank moments.  
Analytic and arithmetic
properties of higher order rank moments were studied by
Bringmann, Lovejoy and Osburn \cite{Br-Lo-Os} and by
Bringmann, the author and Mahlburg \cite{Br-Ga-Ma}.

Andrews \cite{An07a} defined the $k$-th
symmetrized rank function by
$$
\eta_k(n) = \sum_{m=-n}^n \binom{m+\lfloor\frac{k-1}{2}\rfloor}{k} N(m,n),
$$
where $N(m,n)$ is the number of partitions of $n$ with rank $m$.
Andrews gave a new interpretation of the symmetrized rank function in terms
of Durfee symbols. 
As a natural analog to the symmetrized rank function we 
define the 
$k$-th
symmetrized crank function by
$$
\mu_k(n) = \sum_{m=-n}^n \binom{m+\lfloor\frac{k-1}{2}\rfloor}{k} M(m,n),
$$
where $M(m,n)$ is number of partitions of $n$ with crank $m$, for $n\ne1$.
For $n=1$
we define
$$
M(-1,1)=1, M(0,1)=-1, M(1,1)=1,\quad\mbox{and otherwise $M(m,1)=0$.}
$$
One of our main results is the following identity

\begin{align}
&\sum_{n=1}^\infty \left( \mu_{2k}(n) - \eta_{2k}(n)\right) q^n 
\mylabel{eq:mainid}\\
&=
\sum_{n_k \ge n_{k-1} \ge \cdots \ge n_1 \ge 1}
\frac{q^{n_1 + n_2 + \cdots + n_k}}
{(1-q^{n_k})^2 (1-q^{n_{k-1}})^2 \cdots (1-q^{n_1})^2(q^{n_1+1};q)_\infty}.
\nonumber
\end{align}
When $k=1$ this result reduces to \eqn{sptid}. 
In equation \eqn{mainid} and throughout this paper we use 
standard $q$-notation \cite{Ga-Ra-book}. We compare equation \eqn{mainid}
with the identity
\beq
\sum_{n=1}^\infty \mu_{2k}(n) q^n = 
\frac{1}{(q)_\infty}
\sum_{n_k \ge n_{k-1} \ge \cdots \ge n_1 \ge 1}
\frac{q^{n_1 + n_2 + \cdots + n_k}}
{(1-q^{n_k})^2 (1-q^{n_{k-1}})^2 \cdots (1-q^{n_1})^2},
\mylabel{eq:Mukidintro}
\eeq
which is proved in Section \sect{bailey}. Some remarks about this
identity are also given in Section \sect{remarks}.

In Section \sect{symM} we show that many of Andrews' results \cite{An07a}
for symmetrized rank moments can be extended to symmetrized crank moments.
In Section \sect{bailey} we prove a general result for Bailey pairs
from which our main identity \eqn{mainid} follows.
In Section \sect{MNineq},
we use an analog of Stirling numbers of the second kind to show how ordinary 
moments
can be expressed in terms of symmetrized moments and how our main identity
implies the inequality \eqn{MNineq}. For each $k\ge1$, we are able to define
a higher-order spt function $\spt_k(n)$ so that
$$
\spt_k(n) = \mu_{2k}(n) - \eta_{2k}(n),                     
$$
for all $k\ge1$ and $n\ge1$. In Section \sect{sptk}
we give the  combinatorial definition of $\spt_k(n)$
is in terms of a weighted sum over the partitions of $n$. We note
that when $k=1$, $\spt_k(n)$ coincides with Andrews' spt-function.

In Section \sect{congsptk} we prove a number of congruences for the higher order
spt-functions. In Section \sect{remarks} we make
some concluding remarks and close the paper with a table of
$\spt_k(n)$ for small $n$ and $k$.


\section{Symmetrized crank moments}\label{sec:symM}

In this section we collect some results for symmetrized crank moments.
Many of Andrews' results and proofs for symmetrized rank moments have analogs
for symmetrized crank moments; thus we
omit some details.

\begin{align*}
C(z,q) &:= \sum_{n=1}^\infty \sum_{m=-n}^n M(m,n) z^m q^n \\
&= \frac{1}{(q)_\infty}
   \left(1 + \sum_{n=1}^\infty \frac{ (1-z)(1-z^{-1}) (-1)^n q^{n(n+1)/2}(1 + q^n)}
                                    {(1-zq^n)(1-z^{-1}q^n)}\right)\qquad\\
& \hspace{3in}\mbox{(by \cite[Eq.(7.15), p.70]{Ga88a})}\\
&= \frac{1}{(q)_\infty}
   \left(1 + \sum_{n=1}^\infty (-1)^n q^{n(n+1)/2} 
    \left( \frac{1-z}{1-zq^n} + \frac{1-z^{-1}}{1-z^{-1}q^n}\right)\right)\\
&= \frac{(1-z)}
        {(q)_\infty}
    \sum_{n=-\infty}^\infty \frac{(-1)^n q^{n(n+1)/2}}{1-zq^n},
\end{align*}
and
$$
C^{(j)}(z,q) = \frac{-j!}{(q)_\infty} \sum_{\substack{n=-\infty\\n\ne0}}^\infty
                                      \frac{(-1)^nq^{n(n-1)/2 + jn}(1-q^n)}
                                           {(1-zq^n)^{j+1}},
$$
for $j\ge0$.

By \cite[Theorem 1]{An07a} we know that $\eta_k(n)=0$ if $k$ is odd.
In a similar fashion we find that $\mu_k(n)=0$ if $k$ is odd.

We will need
\begin{theorem}[Andrews \cite{An07a}]
\begin{equation}
\sum_{n=1}^\infty \eta_{2k}(n) q^n =
\frac{1}{(q)_\infty} \sum_{n=1}^\infty (-1)^{n-1} q^{n(3n-1)/2 + kn}
\frac{(1+q^n)}
     {(1-q^n)^{2k}}
=\frac{1}{(q)_\infty}
   \sum_{\substack{n=-\infty\\n\ne0}}^\infty
                                      \frac{(-1)^{n-1}q^{n(3n+1)/2 + kn}}
                                           {(1-q^n)^{2k}}.
\mylabel{eq:symNid}
\end{equation}
\end{theorem}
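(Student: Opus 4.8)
The plan is to derive \eqn{symNid} from Andrews' Durfee-symbol interpretation of the symmetrized rank function together with a generating-function manipulation, essentially following Andrews \cite{An07a} but recording the steps we shall need later. First I would recall that $\sum_{m} N(m,n) z^m q^n$ equals the Dyson rank generating function
$$
R(z,q) = \sum_{n=0}^\infty \frac{q^{n^2}}{(zq;q)_n (z^{-1}q;q)_n},
$$
and that by Atkin--Garvan (or by direct partial-fraction expansion) this has the Watson--Andrews closed form
$$
R(z,q) = \frac{1-z}{(q)_\infty}\sum_{n=-\infty}^\infty \frac{(-1)^n q^{n(3n+1)/2}}{1-zq^n}.
$$
The symmetrized rank moments are extracted from $R(z,q)$ by the operator that sends $z^m$ to $\binom{m+\lfloor (k-1)/2\rfloor}{k}$; for even $k=2k$ this is, up to normalization, the $k$-th derivative in a shifted variable evaluated at $z=1$. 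Concretely, one shows
$$
\sum_{n=1}^\infty \eta_{2k}(n) q^n = \left[\frac{1}{k!}\Bigl(z\frac{d}{dz}\Bigr)\cdots\right]R(z,q)\Big|_{z=1}
$$
after the standard reindexing $z\mapsto$ the symmetrizing combination $\tfrac12(z^{1/2}-z^{-1/2})$-type substitution that Andrews uses; the cleanest route is to apply the symmetrizing functional directly to the Appell--Lerch-type sum.

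The heart of the computation is to apply this functional term-by-term to $\dfrac{1-z}{1-zq^n}$. Pairing the $n$ and $-n$ terms in the bilateral sum and using $z\mapsto 1/z$ symmetry, one is reduced to evaluating
$$
\left.\left(\text{$k$-th symmetrized derivative}\right)\frac{1-z}{1-zq^n}\right|_{z=1},
$$
which is an elementary rational-function expansion: writing $1-zq^n = (1-q^n)(1 - \frac{z-1}{\,(1-q^n)/q^n\,})$ or simply expanding in powers of $(z-1)$, the coefficient one needs is $\dfrac{q^{kn}(1+q^n)}{(1-q^n)^{2k}}$ for the folded sum over $n\ge1$, which after unfolding becomes $\dfrac{(-1)^{n-1}q^{n(3n+1)/2+kn}}{(1-q^n)^{2k}}$ over $n\ne 0$. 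Assembling the pieces and pulling out the factor $1/(q)_\infty$ yields both displayed forms in \eqn{symNid}, the first (sum over $n\ge1$ with the $(1+q^n)$ factor) coming from the folded sum and the second (bilateral, over $n\ne0$) from the unfolded one; the sign $(-1)^{n-1}$ and the exponent $n(3n-1)/2$ versus $n(3n+1)/2$ are reconciled by $n\mapsto -n$.

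The main obstacle is bookkeeping rather than conceptual: getting the symmetrizing functional exactly right so that it produces $\binom{m+\lfloor(k-1)/2\rfloor}{k}$ and not some nearby binomial, and then carrying the $(z-1)$-expansion of $\dfrac{1-z}{1-zq^n}$ to the correct order while tracking how the $1-z$ numerator shifts the order by one. I would handle this by first checking $k=1$ against \eqn{sptid} (where the answer must reduce to the second rank moment generating function) and then doing the general case by noting that the $(z-1)$-adic valuation of $(1-z)/(1-zq^n)$ is exactly $1$, so only the coefficient of $(z-1)^{2k}$ in $1/(1-zq^n)$ survives, and that coefficient is $\bigl(q^n/(1-q^n)\bigr)^{2k}\big/(1-q^n)$ up to the combinatorial normalization — giving the claimed $(1-q^n)^{-2k}$ after the fold. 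Since this is a known theorem of Andrews, I would present the proof compactly, citing \cite{An07a} for the Durfee-symbol origin of $\eta_{2k}$ and supplying only the generating-function verification.
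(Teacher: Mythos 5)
Your overall strategy --- apply a symmetrizing differential functional term by term to the Appell--Lerch form of the rank generating function --- is exactly Andrews' argument, and it is also how this paper proves the crank analog \eqn{symMid} (the rank statement itself is only cited here from \cite{An07a}, not reproved). But the step you yourself identify as ``the heart of the computation'' is the step you get wrong. The correct functional is
\[
\sum_{n\ge1}\eta_{2k}(n)q^n=\frac{1}{(2k)!}\left.\left(\frac{\partial}{\partial z}\right)^{2k}\left[z^{k-1}R(z,q)\right]\right\vert_{z=1},
\]
because $\frac{1}{(2k)!}\,\partial_z^{2k}\,z^{m+k-1}\big|_{z=1}=\binom{m+k-1}{2k}=\binom{m+\lfloor(2k-1)/2\rfloor}{2k}$; it is neither a ``$k$-th derivative'' nor a $z^{1/2}$-type substitution, and the prefactor $z^{k-1}$ is not a cosmetic normalization. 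If, as you propose, one keeps only the coefficient of $(z-1)^{2k-1}$ in $1/(1-zq^n)$ (the numerator $1-z=-(z-1)$ supplying the last power), one gets $q^{(2k-1)n}/(1-q^n)^{2k}$ and hence the exponent $n(3n+1)/2+(2k-1)n$, not the claimed $n(3n+1)/2+kn$; your stated coefficient $\bigl(q^n/(1-q^n)\bigr)^{2k}/(1-q^n)$ is off in both the power of $q$ and the power of $1-q^n$.

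The missing ingredient is the convolution with $z^{k-1}=\sum_{i=0}^{k-1}\binom{k-1}{i}(z-1)^i$, which gives for the coefficient of $(z-1)^{2k}$ in $z^{k-1}(1-z)/(1-zq^n)$ the value
\[
-\sum_{i=0}^{k-1}\binom{k-1}{i}\frac{q^{(2k-1-i)n}}{(1-q^n)^{2k-i}}
=-\frac{q^{(2k-1)n}}{(1-q^n)^{2k}}\left(1+\frac{1-q^n}{q^n}\right)^{k-1}
=-\frac{q^{kn}}{(1-q^n)^{2k}},
\]
and it is exactly this binomial resummation that converts $q^{(2k-1)n}$ into the required $q^{kn}$; compare the line $\bigl(1+\frac{q^{-n}}{(1-q^n)^{-1}}\bigr)^{k-1}$ in the paper's proof of \eqn{symMid}. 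Your folding of the $\pm n$ terms (which produces the $(1+q^n)$ factor and reconciles $n(3n+1)/2$ with $n(3n-1)/2$) is correct, and the $n=0$ term of the bilateral sum dies because $\partial_z^{2k}z^{k-1}\equiv0$; with the step above repaired the proof goes through as you outline.
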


This theorem has a crank analog.                                     

\begin{theorem}
\begin{equation}
\sum_{n=1}^\infty \mu_{2k}(n) q^n = 
\frac{1}{(q)_\infty} \sum_{n=1}^\infty (-1)^{n-1} q^{n(n-1)/2 + kn} 
\frac{(1+q^n)}
     {(1-q^n)^{2k}}
=\frac{1}{(q)_\infty}
   \sum_{\substack{n=-\infty\\n\ne0}}^\infty
                                      \frac{(-1)^{n-1}q^{n(n+1)/2 + kn}}
                                           {(1-q^n)^{2k}}.
\mylabel{eq:symMid}
\end{equation}
\end{theorem}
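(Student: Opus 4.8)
The plan is to mirror Andrews' proof of \eqn{symNid} for the rank, working from the generating function $C(z,q)$ and the closed form for its $z$-derivatives $C^{(j)}(z,q)$ recorded above. The first step is to realize $\sum_{n\ge1}\mu_{2k}(n)q^n$ as a differential operator in $z$ applied to $C$ at $z=1$. Since $\lfloor (2k-1)/2\rfloor = k-1$, for any formal series $f(z)=\sum_m a_m z^m$ one has
$$
\sum_m \binom{m+k-1}{2k}a_m = \frac{1}{(2k)!}\left[\frac{\partial^{2k}}{\partial z^{2k}}\bigl(z^{k-1}f(z)\bigr)\right]_{z=1},
$$
because $\partial_z^{2k}(z^{m+k-1}) = (m+k-1)(m+k-2)\cdots(m-k)\,z^{m-k-1}$, which at $z=1$ equals $(2k)!\binom{m+k-1}{2k}$. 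Applying this coefficientwise in $q$ to $f(z)=C(z,q)$ (the $n=1$ term being covered by the special values of $M(m,1)$) gives $\sum_{n\ge1}\mu_{2k}(n)q^n = \frac{1}{(2k)!}\bigl[\partial_z^{2k}(z^{k-1}C(z,q))\bigr]_{z=1}$.

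Next I would expand by the Leibniz rule. Since $\partial_z^i(z^{k-1})$ vanishes for $i\ge k$ and equals $\frac{(k-1)!}{(k-1-i)!}z^{k-1-i}$ for $i\le k-1$, evaluation at $z=1$ yields
$$
\sum_{n\ge1}\mu_{2k}(n)q^n = \frac{1}{(2k)!}\sum_{i=0}^{k-1}\binom{2k}{i}\frac{(k-1)!}{(k-1-i)!}\,C^{(2k-i)}(1,q).
$$
Now substitute the closed form for $C^{(j)}(z,q)$ from Section \sect{symM}, evaluated at $z=1$: the numerator factor $(1-q^n)$ cancels one factor in the denominator, giving $C^{(j)}(1,q)=\frac{-j!}{(q)_\infty}\sum_{n\ne0}\frac{(-1)^n q^{n(n-1)/2+jn}}{(1-q^n)^{j}}$.

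The crux is a collapse of the combinatorial coefficients. After substitution, the coefficient multiplying $\frac{(-1)^n q^{n(n-1)/2}q^{(2k-i)n}}{(1-q^n)^{2k-i}}$ in the $i$-th summand simplifies, using $\binom{2k}{i}=\frac{(2k)!}{i!(2k-i)!}$, to exactly $\binom{k-1}{i}$. Writing $x=q^n/(1-q^n)$, so that $1+x=1/(1-q^n)$, the inner sum over $i$ becomes $\sum_{i=0}^{k-1}\binom{k-1}{i}x^{2k-i}=x^{k+1}(1+x)^{k-1}=q^{(k+1)n}/(1-q^n)^{2k}$ by the binomial theorem. Since $n(n-1)/2+(k+1)n = n(n+1)/2+kn$, this establishes the bilateral form $\sum_{n\ge1}\mu_{2k}(n)q^n = \frac{1}{(q)_\infty}\sum_{n\ne0}\frac{(-1)^{n-1}q^{n(n+1)/2+kn}}{(1-q^n)^{2k}}$ in \eqn{symMid}. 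The unilateral form then follows by pairing the terms $n$ and $-n$: replacing $n$ by $-n$ in the part with $n<0$ and using $(1-q^{-n})^{-2k}=q^{2kn}(1-q^n)^{-2k}$ combines each pair into $(-1)^{n-1}q^{n(n-1)/2+kn}(1+q^n)/(1-q^n)^{2k}$.

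I expect the main obstacle to be bookkeeping rather than anything conceptual: getting the Leibniz coefficients to telescope into $\binom{k-1}{i}$, keeping the three exponents $n(n-1)/2$, $(2k-i)n$, and $jn$ straight through the cancellation of $(1-q^n)$ factors, and checking that the ad hoc definition of $M(m,1)$ is consistent with the operator computation at $n=1$. None of these steps requires an idea beyond those Andrews already used for \eqn{symNid}, which is why the details can be compressed.
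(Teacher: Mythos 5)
Your proposal is correct and follows essentially the same route as the paper: express the generating function as $\frac{1}{(2k)!}\partial_z^{2k}\bigl(z^{k-1}C(z,q)\bigr)\big\vert_{z=1}$, expand by Leibniz, watch the Leibniz coefficients collapse to $\binom{k-1}{i}$ after substituting the closed form for $C^{(j)}(1,q)$, and finish with the binomial theorem (your $x^{k+1}(1+x)^{k-1}$ with $x=q^n/(1-q^n)$ is the same collapse the paper writes as $\bigl(1+q^{-n}(1-q^n)\bigr)^{k-1}=q^{-(k-1)n}$). The only difference is that you make explicit the bilateral-to-unilateral pairing, which the paper leaves implicit.
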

\begin{proof}
As in the proof of  \cite[Theorem 2]{An07a} we have
\begin{align*}
\sum_{n=1}^\infty \mu_{2k}(n) q^n &
= \frac{1}{(2k)!} 
\left.\left(\left(\frac{\partial}{\partial z}\right)^{2k} z^{k-1} C(z,q) \right)\right\vert_{z=1}\\
&= \frac{1}{(2k)!}
   \sum_{j=0}^{k-1} \binom{2k}{j} (v-1) \cdots (v-j) C^{(2k-j)}(1,q)\\
&=\frac{1}{(q)_\infty}
   \sum_{j=0}^{k-1} \binom{k-1}{j} 
   \sum_{\substack{n=-\infty\\n\ne0}}^\infty
                                      \frac{(-1)^{n-1}q^{n(n-1)/2 + (2k-j)n}(1-q^n)}
                                           {(1-q^n)^{2k-j+1}},\\
&=\frac{1}{(q)_\infty}
   \sum_{\substack{n=-\infty\\n\ne0}}^\infty
                                      \frac{(-1)^{n-1}q^{n(n-1)/2 + 2kn}}
                                           {(1-q^n)^{2k}}
\left(1 + \frac{q^{-n}}{(1-q^n)^{-1}}\right)^{k-1}\\
&=\frac{1}{(q)_\infty} 
   \sum_{\substack{n=-\infty\\n\ne0}}^\infty
                                      \frac{(-1)^{n-1}q^{n(n+1)/2 + kn}}
                                           {(1-q^n)^{2k}}.
\end{align*}
\end{proof}

\section{Rank moments, crank moments and Bailey Chains}\label{sec:bailey}

In \cite{Pa10}, Alexander Patkowski used a limiting form
of Bailey's Lemma to obtain a partition identity analogous to 
\eqn{sptid}, which relates an spt-like function to the second rank
moment. We consider a similar limiting form that iterates Bailey's Lemma
and obtain a general theorem for Bailey pairs (see Theorem \thm{mainthm} below).
Then we show how our main identity \eqn{mainid} for
the difference between symmetrized crank and rank moments follows
from using well-known Bailey pairs. In this section we use the
standard notation found in \cite{Ga-Ra-book}.

\begin{definition}
\label{def:baileypair}
A pair of sequences $(\alpha_n(a,q),\beta_n(a,q))$ is called a Bailey pair
with parameters $(a,q)$ if
$$
\beta_n(a,q) = \sum_{r=0}^n \frac{\alpha_r(a,q)}{(q;q)_{n-r} (aq;q)_{n+r}}
$$
for all $n\ge 0$.
\end{definition}

\begin{theorem}[Bailey's Lemma]
Suppose  $(\alpha_n(a,q),\beta_n(a,q))$ is a Bailey pair
with parameters $(a,q)$.
Then $(\alpha_n'(a,q), \beta_n'(a,q))$
is another Bailey pair with
with parameters $(a,q)$, where
$$
\alpha_n'(a,q)=\frac{(\rho_1,\rho_2;q)_n}{(aq/\rho_1,aq/\rho_2;q)_n}
\left(\frac{aq}{\rho_1\rho_2}\right)^n \alpha_n(a,q)
$$
and
$$
\beta_n'(a,q)=
\sum_{k=0}^n \frac{(\rho_1,\rho_2;q)_k (aq/\rho_1\rho_2;q)_{n-k}}
{(aq/\rho_1,aq/\rho_2;q)_{n}(q;q)_{n-k}}
\left(\frac{aq}{\rho_1\rho_2}\right)^k\beta_k(a,q).
$$
\end{theorem}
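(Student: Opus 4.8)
The plan is to verify directly that the primed pair satisfies the defining relation of a Bailey pair (Definition~\ref{def:baileypair}), i.e.\ that $\beta_n'(a,q)=\sum_{r=0}^{n}\alpha_r'(a,q)/\big((q;q)_{n-r}(aq;q)_{n+r}\big)$. First I would substitute the hypothesis $\beta_k(a,q)=\sum_{r=0}^{k}\alpha_r(a,q)/\big((q;q)_{k-r}(aq;q)_{k+r}\big)$ into the stated formula for $\beta_n'(a,q)$, producing a double sum over the triangular range $0\le r\le k\le n$. Interchanging the order of summation isolates, for each fixed $r$, the coefficient of $\alpha_r(a,q)$ as $1/(aq/\rho_1,aq/\rho_2;q)_n$ times an inner sum over $k$ from $r$ to $n$. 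The substitution $k=r+j$, together with the factorizations $(\rho_1,\rho_2;q)_{r+j}=(\rho_1,\rho_2;q)_r(\rho_1 q^r,\rho_2 q^r;q)_j$ and $(aq;q)_{2r+j}=(aq;q)_{2r}(aq^{2r+1};q)_j$, then rewrites this inner sum as $(\rho_1,\rho_2;q)_r\,(aq/\rho_1\rho_2)^r$ times a terminating basic hypergeometric series in $j$ of length $n-r+1$.

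The crux is to evaluate that terminating series in closed form, namely to show it equals $(aq^{r+1}/\rho_1,aq^{r+1}/\rho_2;q)_{n-r}/\big((q;q)_{n-r}(aq;q)_{n+r}\big)$. To this end I would reverse the $q$-Pochhammer symbols of the form $(\,\cdot\,;q)_{n-r-j}$ using the reflection identity $(x;q)_{N-j}=(x;q)_N\,(-q/x)^j q^{\binom{j}{2}-Nj}/(q^{1-N}/x;q)_j$ with $N=n-r$. After the $q^{\binom{j}{2}-Nj}$ factors and the signs cancel and the residual power collapses to $q^j$, the sum over $j$ becomes a \emph{balanced} ${}_3\phi_2$ with upper parameters $q^{-(n-r)},\,\rho_1 q^r,\,\rho_2 q^r$ and lower parameters $aq^{2r+1}$ and $\rho_1\rho_2 q^{r-n}/a$; the Saalsch\"utzian relation holds because $aq^{2r+1}\cdot\big(\rho_1\rho_2 q^{r-n}/a\big)=q\cdot q^{-(n-r)}\cdot\rho_1 q^r\cdot\rho_2 q^r$. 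Applying the $q$-Pfaff--Saalsch\"utz summation (see \cite{Ga-Ra-book}) then evaluates it.

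It remains only to simplify the resulting product: the factor $(aq/\rho_1\rho_2;q)_{n-r}$ generated by the reflection step cancels against the identical factor produced by the Saalsch\"utz evaluation, and $(aq;q)_{2r}(aq^{2r+1};q)_{n-r}=(aq;q)_{n+r}$, which yields exactly the asserted closed form for the inner sum. Multiplying back by the pulled-out factor $(\rho_1,\rho_2;q)_r(aq/\rho_1\rho_2)^r$ and by $1/(aq/\rho_1,aq/\rho_2;q)_n$, and using the telescoping $(aq/\rho_1,aq/\rho_2;q)_n=(aq/\rho_1,aq/\rho_2;q)_r\,(aq^{r+1}/\rho_1,aq^{r+1}/\rho_2;q)_{n-r}$, shows that the coefficient of $\alpha_r(a,q)$ is precisely $\alpha_r'(a,q)/\big((q;q)_{n-r}(aq;q)_{n+r}\big)$, which is the claim. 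I expect the only real obstacle to be the bookkeeping in the reflection step---tracking the $q$-powers and signs carefully enough that the inner sum is manifestly balanced and the hypotheses of $q$-Pfaff--Saalsch\"utz are literally met; there are no convergence issues, since every sum here is finite.
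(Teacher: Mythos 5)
Your proof is correct. Note that the paper itself gives no proof of this theorem: Bailey's Lemma is quoted as a classical result, with the reader referred to \cite[Ch.~3]{An-CBMS-book} for background. The argument you outline is precisely the standard one found there: substitute the defining relation of $(\alpha_n,\beta_n)$ into $\beta_n'$, interchange the order of summation, and evaluate the coefficient of each $\alpha_r(a,q)$ by the $q$-Pfaff--Saalsch\"utz summation. All the checkpoints in your sketch are right: after the shift $k=r+j$ and the reflection of the $(\,\cdot\,;q)_{n-r-j}$ symbols, the residual power is indeed $(\rho_1\rho_2/a)^j(aq/\rho_1\rho_2)^j=q^j$, the resulting ${}_3\phi_2$ is balanced, and the Saalsch\"utz evaluation
$\bigl(aq^{r+1}/\rho_1,aq^{r+1}/\rho_2;q\bigr)_{n-r}/\bigl((aq^{2r+1};q)_{n-r}(aq/\rho_1\rho_2;q)_{n-r}\bigr)$
combines with $(aq;q)_{2r}(aq^{2r+1};q)_{n-r}=(aq;q)_{n+r}$ and the telescoping of $(aq/\rho_1,aq/\rho_2;q)_n$ to give exactly $\alpha_r'(a,q)/\bigl((q;q)_{n-r}(aq;q)_{n+r}\bigr)$, as required by Definition~\ref{def:baileypair}.
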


For more information on Bailey's Lemma and its applications
see \cite[Ch.3]{An-CBMS-book}.
We will need the following limit which is an easy exercise.
\begin{equation}
\lim_{\rho_2\to1} 
\lim_{\rho_1\to1} 
\frac{1}{(1-\rho_1)(1-\rho_2)}
\left(1 - \frac{ (q)_k (q/\rho_1 \rho_2)_k}{(q/\rho_1)_k (q/\rho_2)_k}\right)
=
\sum_{j=1}^k \frac{q^j}{(1-q^j)^2}.
\mylabel{eq:easylimit}
\end{equation}

\begin{theorem}
\label{thm:mainthm}
Suppose  $(\alpha_n,\beta_n)=(\alpha_n(1,q),\beta_n(1,q))$ is a Bailey pair
with $a=1$,  and $\alpha_0=\beta_0=1$. Then
\begin{align*}
&\sum_{n_k \ge n_{k-1} \ge \cdots \ge n_1 \ge 1}
\frac{ (q)_{n_1}^2 q^{n_1 + n_2 + \cdots + n_k} \beta_{n_1}}
{(1-q^{n_k})^2 (1-q^{n_{k-1}})^2 \cdots (1-q^{n_1})^2}\\
&=
\sum_{n_k \ge n_{k-1} \ge \cdots \ge n_1 \ge 1}
\frac{q^{n_1 + n_2 + \cdots + n_k}}
{(1-q^{n_k})^2 (1-q^{n_{k-1}})^2 \cdots (1-q^{n_1})^2}
+ 
\sum_{r=1}^\infty \frac{q^{kr} \alpha_r}{ (1-q^r)^{2k}}.
\end{align*}
\end{theorem}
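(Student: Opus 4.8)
The plan is to obtain the identity by iterating Bailey's Lemma $k$ times and then letting all the newly inserted parameters tend to $1$, with the regularized limit \eqn{easylimit} controlling the leading‑order behaviour. Write $R=\prod_{i=1}^{k}(1-\rho_1^{(i)})(1-\rho_2^{(i)})$, and let $(\alpha_n^{(k)},\beta_n^{(k)})$ be the Bailey pair produced from $(\alpha_n,\beta_n)$ by applying Bailey's Lemma once for each $i=1,\dots,k$ with parameters $(\rho_1^{(i)},\rho_2^{(i)})$. Unwinding the iteration, $\alpha_n^{(k)}$ is $\alpha_n$ times a product of $k$ elementary factors, while $\beta_n^{(k)}$ is a $k$-fold nested sum over $n=n_{k+1}\ge n_k\ge\cdots\ge n_1\ge0$ against $\beta_{n_1}$. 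Since $(\rho;q)_m/(1-\rho)\to(q;q)_{m-1}$ as $\rho\to1$ for $m\ge1$, the portion of $\beta_n^{(k)}$ coming from the terms with $n_1\ge1$ satisfies, in the limit $\rho_j^{(i)}\to1$,
$$\frac{(q)_n^2}{R}\,\Bigl(\text{the }n_1\ge1\text{ part of }\beta_n^{(k)}\Bigr)\ \longrightarrow\ \sum_{n\ge n_k\ge\cdots\ge n_1\ge1}\frac{(q)_{n_1}^2\,q^{n_1+\cdots+n_k}\,\beta_{n_1}}{(1-q^{n_k})^2\cdots(1-q^{n_1})^2},$$
and letting $n\to\infty$ deletes the constraint $n\ge n_k$, producing precisely the left side of the theorem.

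For the right side I would feed this into the defining relation of the Bailey pair $(\alpha^{(k)},\beta^{(k)})$, namely $\beta_n^{(k)}=1/(q)_n^2+\sum_{r=1}^{n}\alpha_r^{(k)}/\bigl((q)_{n-r}(q)_{n+r}\bigr)$, the $r=0$ term being isolated using $\alpha_0^{(k)}=\alpha_0=1$. Multiplying by $(q)_n^2$, dividing by $R$, and passing to the limit: on the $\alpha$-side one finds $\alpha_r^{(k)}/R\to\alpha_r\prod_{i=1}^{k}q^r/(1-q^r)^2=q^{kr}\alpha_r/(1-q^r)^{2k}$, which after $n\to\infty$ (using $(q)_n^2/((q)_{n-r}(q)_{n+r})\to1$) assembles into $\sum_{r\ge1}q^{kr}\alpha_r/(1-q^r)^{2k}$, the second sum on the right. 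The term $\bigl(1-(q)_n^2\cdot(\text{the }n_1=0\text{ part of }\beta_n^{(k)})\bigr)/R$ is exactly where \eqn{easylimit} is needed: when $n_1=0$ the innermost of the nested factors collapses to $(q/\rho_1^{(1)}\rho_2^{(1)};q)_{n_2}/\bigl[(q/\rho_1^{(1)},q/\rho_2^{(1)};q)_{n_2}\,(q)_{n_2}\bigr]$, whose deviation from $1/(q)_{n_2}$ is, to first order in $(1-\rho_1^{(1)})(1-\rho_2^{(1)})$, governed by the sum $\sum_{j=1}^{n_2}q^j/(1-q^j)^2$; carrying this through the remaining $k-1$ factors and then taking $n\to\infty$ yields $\sum_{n_k\ge\cdots\ge n_1\ge1}q^{n_1+\cdots+n_k}/\prod_{i}(1-q^{n_i})^2$, the first sum on the right.

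An equivalent and probably cleaner way to organize the write-up is to bypass the iterated pair: substitute $\beta_{n_1}=\sum_{r\ge0}\alpha_r/\bigl((q)_{n_1-r}(q)_{n_1+r}\bigr)$ directly into the left side, split off the $r=0$ contribution (which equals $1$ for every $n_1$ and reassembles into the first sum on the right), and interchange the order of summation. The theorem then reduces to the single-parameter identity, valid for each fixed $r\ge1$,
$$\sum_{n_k\ge\cdots\ge n_1\ge r}\Bigl(\prod_{i=1}^{k}\frac{q^{n_i}}{(1-q^{n_i})^2}\Bigr)\frac{(q)_{n_1}^2}{(q)_{n_1-r}(q)_{n_1+r}}=\frac{q^{kr}}{(1-q^r)^{2k}},$$
which I would prove by induction on $k$: the base case reduces to $\sum_{n\ge r}q^n(q)_{n-1}^2/\bigl((q)_{n-r}(q)_{n+r}\bigr)=q^r/(1-q^r)^2$, which follows from a partial-fraction decomposition in the variable $q^n$ together with a telescoping sum, and the inductive step, which peels off the outermost index $n_k$, requires strengthening the statement to a closed form for the partial sums in which $n_k$ ranges only up to a bound $m$. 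The hard part in either approach is bookkeeping rather than ideas: in the limiting-Bailey route, justifying that the $\rho_j^{(i)}\to1$ and $n\to\infty$ limits may be taken in this order and correctly extracting the leading contribution of the degenerate $n_1=0$ stratum; in the direct route, pushing the induction through for all $r$, where the naive telescoping ansatz breaks down for $r\ge2$ and one must track the $n_k$-dependent tails of the partial sums.
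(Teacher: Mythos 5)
Your first route is, in substance, the paper's own proof: iterate Bailey's Lemma and extract the answer from the regularized limit \eqn{easylimit}. The one organizational difference matters, though. You perform all $k$ iterations first and divide by the full product $R$ only at the end, which turns the degenerate $n_1=0$ stratum into a genuinely multivariable limit: the quantity $1-(q)_n^2\cdot(\text{the }n_1=0\text{ part of }\beta_n^{(k)})$ involves all $k$ pairs of parameters simultaneously, and \eqn{easylimit} as stated does not evaluate it --- your phrase ``carrying this through the remaining $k-1$ factors'' is exactly where the unresolved work sits. The paper sidesteps this by interleaving the limits with the iterations: the induction hypothesis (in which the limits for the first $K-1$ levels have already been taken) is applied to the once-transformed pair $(\alpha_n',\beta_n')$, so only a single pair $(\rho_1,\rho_2)$ is ever in play, the degenerate stratum is the single factor $1-\frac{(q)_{n_2}(q/\rho_1\rho_2)_{n_2}}{(q/\rho_1)_{n_2}(q/\rho_2)_{n_2}}$ multiplying an already-limited nested sum, and \eqn{easylimit} applies verbatim to insert one more level of nesting. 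If you insist on the all-at-once organization you must either prove a $k$-parameter generalization of \eqn{easylimit} or otherwise establish the existence and value of that limit; restructuring as the paper does removes the issue. (Your treatment of the $n_1\ge1$ stratum and of the $\alpha_r$ terms is fine: every such term carries all $2k$ vanishing factors, and the limits telescope to the stated expressions.)

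Your second route is genuinely different, and the reduction step is correct and clean: substituting the defining relation for $\beta_{n_1}$, the $r=0$ term contributes $1$ for every $n_1$ and reassembles into the first sum on the right, so the theorem is equivalent, term by term in $r$, to the single-parameter identity you display (which is true). But that identity carries the entire content of the theorem, and you do not prove it: you concede that the telescoping fails for $r\ge2$ and that the induction on $k$ needs a strengthened partial-sum statement you have not formulated. As written, then, neither route is complete, though both gaps are of the ``hard bookkeeping'' kind you correctly flag rather than conceptual errors; the cheapest repair is to adopt the paper's interleaved induction, under which each step is a single application of Bailey's Lemma followed by one application of \eqn{easylimit}.
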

\begin{proof}
From Bailey's Lemma we have
\begin{align*}
\sum_{j=1}^n 
&\frac{ (\rho_1)_j (\rho_2)_j (q/\rho_1\rho_2)_{n-j} (q/\rho_1\rho_2)^j \beta_j}
     { (q)_{n-j}}\\
& = \frac{ (q/\rho_1)_n (q/\rho_2)_n}{(q)_n^2}
    \left(1 - \frac{ (q)_n (q/\rho_1\rho_2)_n }
                   { (q/\rho_1)_n (q/\rho_2)_n } \right) \\
& \qquad \qquad +
   (q/\rho_1)_n (q/\rho_2)_n 
   \sum_{r=1}^n \frac{ (\rho_1)_r (\rho_2)_r }
                {(q)_{n-r} (q)_{n+r} (q/\rho_1)_r (q/\rho_2)_r }
   \left( \frac{q}{\rho_1 \rho_2}\right)^r \alpha_r.
\end{align*}
We divide both sides by $(1-\rho_1)(1-\rho_2)$, let 
$\rho_1\to1$, $\rho_2\to1$, and use \eqn{easylimit} to obtain
$$                  
\sum_{j=1}^n (q)_{j-1}^2 q^j \beta_j 
 = \sum_{j=1}^n \frac{q^j}{(1-q^j)^2}
    + (q)_n^2 \sum_{r=1}^n \frac{ q^r \alpha_r}{(q)_{n-r} (q)_{n+r} (1-q^r)^2}.
$$
Letting $n\to\infty$ we have
$$                  
\sum_{j=1}^\infty (q)_{j-1}^2 q^j \beta_j 
 = \sum_{j=1}^\infty \frac{q^j}{(1-q^j)^2}
    + \sum_{r=1}^\infty \frac{ q^r \alpha_r}{(1-q^r)^2},
$$
which is the case $k=1$ of the theorem.

Now we suppose that the theorem is true for $k=K-1$, so that
\begin{align*}
&\sum_{n_K \ge n_{K-1} \ge \cdots \ge n_2 \ge 1}
\frac{ (q)_{n_2}^2 q^{n_2 + \cdots + n_K} \beta_{n_2}}
{(1-q^{n_K})^2 (1-q^{n_{K-1}})^2 \cdots (1-q^{n_2})^2}\\
&=
\sum_{n_K \ge n_{K-1} \ge \cdots \ge n_2 \ge 1}
\frac{q^{n_2 + \cdots + n_K}}
{(1-q^{n_K})^2 (1-q^{n_{K-1}})^2 \cdots (1-q^{n_2})^2}
+ 
\sum_{r=1}^\infty \frac{q^{(K-1)r} \alpha_r}{ (1-q^r)^{2K-2}}.
\end{align*}
We now replace $(\alpha_n,\beta_n)$ by the Bailey pair
$(\alpha_n',\beta_n')$ in Bailey's Lemma to obtain
\begin{align*}
&\sum_{\substack{n_K \ge n_{K-1} \ge \cdots \ge n_2 \ge 1\\ n_2\ge n_1\ge0}}
\frac{ (q)_{n_2}^2 q^{n_2 + \cdots + n_K} 
(\rho_1)_{n_1} (\rho_2)_{n_1} (q/\rho_1\rho_2)_{n_2-n_1} (q/\rho_1\rho_2)^{n_1} 
\beta_{n_1}} 
{(1-q^{n_K})^2 (1-q^{n_{K-1}})^2 \cdots (1-q^{n_2})^2 
(q)_{n_2-n_1} (q/\rho_1)_{n_2} (q/\rho_2)_{n_2}}\\
&=
\sum_{n_K \ge n_{K-1} \ge \cdots \ge n_2 \ge 1}
\frac{q^{n_2 + \cdots + n_K}}
{(1-q^{n_K})^2 (1-q^{n_{K-1}})^2 \cdots (1-q^{n_2})^2}
+ 
\sum_{r=1}^\infty \frac{q^{(K-1)r} (\rho_1)_r (\rho_2)_r (q/\rho_1\rho_2)^r \alpha_r}
{(1-q^r)^{2K-2} (q/\rho_1)_r (q/\rho_2)_r},
\end{align*}

and
\begin{align*}
&\sum_{n_K \ge n_{K-1} \ge \cdots \ge n_2 \ge n_1\ge1}
\frac{ (q)_{n_2}^2 q^{n_2 + \cdots + n_K} 
(\rho_1)_{n_1} (\rho_2)_{n_1} (q/\rho_1\rho_2)_{n_2-n_1} (q/\rho_1\rho_2)^{n_1} 
\beta_{n_1}} 
{(1-q^{n_K})^2 (1-q^{n_{K-1}})^2 \cdots (1-q^{n_2})^2 
(q)_{n_2-n_1} (q/\rho_1)_{n_2} (q/\rho_2)_{n_2}}\\
&=
\sum_{n_K \ge n_{K-1} \ge \cdots \ge n_2 \ge 1}
\frac{q^{n_2 + \cdots + n_K}}
{(1-q^{n_K})^2 (1-q^{n_{K-1}})^2 \cdots (1-q^{n_2})^2}
\left(1 - \frac{ (q)_{n_2} (q/\rho_1\rho_2)_{n_2}}
               { (q/\rho_1)_{n_2} (q/\rho_2)_{n_2} } \right) \\
&\qquad 
+ 
\sum_{r=1}^\infty \frac{q^{(K-1)r} (\rho_1)_r (\rho_2)_r (q/\rho_1\rho_2)^r \alpha_r}
{(1-q^r)^{2K-2} (q/\rho_1)_r (q/\rho_2)_r}.
\end{align*}
We divide both sides by $(1-\rho_1)(1-\rho_2)$, let
$\rho_1\to1$, $\rho_2\to1$, and use \eqn{easylimit} to obtain
\begin{align*}
&\sum_{n_K \ge n_{K-1} \ge \cdots \ge n_1 \ge 1}
\frac{ (q)_{n_1}^2 q^{n_1 + n_2 + \cdots + n_K} \beta_{n_1}}
{(1-q^{n_K})^2 (1-q^{n_{K-1}})^2 \cdots (1-q^{n_1})^2}\\
&=
\sum_{n_K \ge n_{K-1} \ge \cdots \ge n_1 \ge 1}
\frac{q^{n_1 + n_2 + \cdots + n_K}}
{(1-q^{n_K})^2 (1-q^{n_{K-1}})^2 \cdots (1-q^{n_1})^2}
+ 
\sum_{r=1}^\infty \frac{q^{Kr} \alpha_r}{ (1-q^r)^{2K}},
\end{align*}
which is the result for $k=K$. The general result follows by induction.
\end{proof}

\begin{cor}
\begin{equation}
\sum_{n_k \ge n_{k-1} \ge \cdots \ge n_1 \ge 1}
\frac{q^{n_1 + n_2 + \cdots + n_k}}
{(1-q^{n_k})^2 (1-q^{n_{k-1}})^2 \cdots (1-q^{n_1})^2}
=
\sum_{n=1}^\infty (-1)^{n-1} q^{n(n-1)/2 + kn} \frac{ (1+q^n) }
                                                    { (1-q^n)^{2k} }.
\mylabel{eq:symMid2}
\end{equation}
\end{cor}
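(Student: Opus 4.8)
The plan is to obtain \eqn{symMid2} as the special case of Theorem~\thm{mainthm} corresponding to the \emph{unit Bailey pair} relative to $a=1$, for which the left-hand side of that theorem collapses to $0$. Concretely, I would take the pair $(\alpha_n,\beta_n)$ with $\beta_n=\delta_{n,0}$, $\alpha_0=1$, and
$$
\alpha_n=(-1)^n(1+q^n)q^{n(n-1)/2}\qquad(n\ge1),
$$
and substitute it into Theorem~\thm{mainthm}.

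First I would justify that this is a Bailey pair with parameters $(1,q)$. It is the $a\to1$ limit of the standard unit Bailey pair $\beta_n^{(u)}=\delta_{n,0}$, $\alpha_n^{(u)}=(-1)^n(1-aq^{2n})(a;q)_n q^{n(n-1)/2}/\big((1-a)(q;q)_n\big)$ (see \cite[Ch.~3]{An-CBMS-book}); using $(a;q)_n/(1-a)\to(q;q)_{n-1}$ and $1-aq^{2n}\to1-q^{2n}$ for $n\ge1$ gives exactly the formula above, while the $n=0$ term gives $\alpha_0=1$. Alternatively one can verify the defining relation $\sum_{r=0}^n\alpha_r/\big((q)_{n-r}(q)_{n+r}\big)=\delta_{n,0}$ directly: writing $1/\big((q)_{n-r}(q)_{n+r}\big)=\binom{2n}{n-r}_q/(q)_{2n}$, pairing the $r$- and $(-r)$-contributions (so that the factor $1+q^s$ appears), and then applying the terminating $q$-binomial theorem turns the sum into a monomial times $(q^{1-n};q)_{2n}$, which vanishes for $n\ge1$ because it contains the factor $1-q^0$.

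With this pair the hypotheses of Theorem~\thm{mainthm} hold ($a=1$, $\alpha_0=\beta_0=1$). In the left-hand multisum of that theorem the innermost index runs over $n_1\ge1$, where $\beta_{n_1}=\delta_{n_1,0}=0$, so the whole left-hand side is $0$. Hence
$$
0=\sum_{n_k\ge n_{k-1}\ge\cdots\ge n_1\ge1}\frac{q^{n_1+n_2+\cdots+n_k}}{(1-q^{n_k})^2(1-q^{n_{k-1}})^2\cdots(1-q^{n_1})^2}+\sum_{r=1}^\infty\frac{q^{kr}\alpha_r}{(1-q^r)^{2k}}.
$$
Solving for the multisum, substituting $\alpha_r=(-1)^r(1+q^r)q^{r(r-1)/2}$, and renaming $r$ as $n$ then yields precisely \eqn{symMid2}. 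There is essentially no hard step; the only point that needs a little care is the $a\to1$ limit of the unit Bailey pair (or, equivalently, the direct check), since the naive substitution $a=1$ produces a $0/0$ in the factor $(a;q)_n/(1-a)$ and would spuriously give $\alpha_0=2$ rather than $\alpha_0=1$.
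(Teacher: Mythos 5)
Your proposal is correct and follows exactly the paper's route: the paper also specializes Theorem~\thm{mainthm} to the unit Bailey pair $\beta_n=\delta_{n,0}$, $\alpha_0=1$, $\alpha_n=(-1)^nq^{n(n-1)/2}(1+q^n)$ for $n\ge1$, so that the left-hand multisum vanishes and the identity drops out. Your additional verification that this is indeed a Bailey pair (via the $a\to1$ limit or the direct $q$-binomial check) is a detail the paper delegates to \cite[pp.~27--28]{An-CBMS-book}, but it is a welcome and accurate supplement.
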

\begin{proof}
The result follows from Theorem \thm{mainthm} using the well-known Bailey pair
\cite[pp.27-28]{An-CBMS-book}
$$
\alpha_n = \begin{cases} 1 & n=0,\\ (-1)^n q^{n(n-1)/2}(1+q^n) & n\ge1,
           \end{cases}\qquad
\beta_n = \begin{cases} 1 & n=0,\\ 0 & n\ge1. \end{cases}
$$
\end{proof}

We note that we can rewrite \eqn{symMid2} as
\beq
\sum_{n=1}^\infty \mu_{2k}(n) q^n = 
\frac{1}{(q)_\infty}
\sum_{n_k \ge n_{k-1} \ge \cdots \ge n_1 \ge 1}
\frac{q^{n_1 + n_2 + \cdots + n_k}}
{(1-q^{n_k})^2 (1-q^{n_{k-1}})^2 \cdots (1-q^{n_1})^2},
\mylabel{eq:Mukid}
\eeq
after using \eqn{symMid}.

\begin{cor}
\begin{align}
&\sum_{n_k \ge n_{k-1} \ge \cdots \ge n_1 \ge 1}
\frac{(q)_{n_1} q^{n_1 + n_2 + \cdots + n_k}}
{(1-q^{n_k})^2 (1-q^{n_{k-1}})^2 \cdots (1-q^{n_1})^2} 
\mylabel{eq:symNid2}\\
&=
\sum_{n_k \ge n_{k-1} \ge \cdots \ge n_1 \ge 1}
\frac{q^{n_1 + n_2 + \cdots + n_k}}
{(1-q^{n_k})^2 (1-q^{n_{k-1}})^2 \cdots (1-q^{n_1})^2}
+
\sum_{n=1}^\infty (-1)^{n} q^{n(3n-1)/2 + kn} \frac{ (1+q^n) }
                                                    { (1-q^n)^{2k} }.
\nonumber
\end{align}
\end{cor}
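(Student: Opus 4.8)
The plan is to apply Theorem~\thm{mainthm} to a carefully chosen Bailey pair, in exactly the same spirit as the proof of \eqn{symMid2}. Matching the left-hand side of \eqn{symNid2} against the left-hand side in Theorem~\thm{mainthm}, one is led to require $(q)_{n}^{2}\beta_{n}=(q)_{n}$, i.e.\ $\beta_{n}=1/(q)_{n}$; and matching the second sums on the two right-hand sides, the associated $\alpha$-sequence must satisfy $\alpha_{0}=1$ together with $q^{kr}\alpha_{r}/(1-q^{r})^{2k}=(-1)^{r}q^{r(3r-1)/2+kr}(1+q^{r})/(1-q^{r})^{2k}$ for $r\ge1$, that is, $\alpha_{r}=(-1)^{r}q^{r(3r-1)/2}(1+q^{r})$. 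Since then $\alpha_{0}=\beta_{0}=1$, the hypotheses of Theorem~\thm{mainthm} are met, and substituting this pair into that theorem gives \eqn{symNid2} immediately.

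So the only real content is to confirm that
\[
\alpha_{n}=\begin{cases}1,&n=0,\\ (-1)^{n}q^{n(3n-1)/2}(1+q^{n}),&n\ge1,\end{cases}
\qquad
\beta_{n}=\frac{1}{(q)_{n}},
\]
is a Bailey pair relative to $a=1$. This is a classical pair --- it may be taken directly from the tables in \cite{An-CBMS-book}, or verified by hand: clearing denominators in the relation of Definition~\ref{def:baileypair} by means of $(q)_{2n}/\bigl((q)_{n-r}(q)_{n+r}\bigr)=\binom{2n}{n-r}_{q}$ and using $\binom{2n}{n-r}_{q}=\binom{2n}{n+r}_{q}$ reduces the claim to the finite form of Euler's pentagonal number theorem, $\sum_{r=-n}^{n}(-1)^{r}q^{r(3r-1)/2}\binom{2n}{n+r}_{q}=(q)_{2n}/(q)_{n}$, which is standard. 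As a consistency check, letting $n\to\infty$ in Definition~\ref{def:baileypair} collapses to $\sum_{r\ge0}\alpha_{r}=(q)_{\infty}$, which is the pentagonal number theorem itself (and confirms $\alpha_{0}=1$ together with the sign normalization $(-1)^{r}$).

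Because Theorem~\thm{mainthm} is already in hand, there is essentially no obstacle: the argument is the parameter bookkeeping above followed by one invocation of that theorem. The only points requiring any care are the identification (or short verification) of the Bailey pair, which is routine $q$-series, and checking that the signs come out as stated --- the residual sum in \eqn{symNid2} carries $(-1)^{n}$ rather than the $(-1)^{n-1}$ appearing in \eqn{symNid}, and this is precisely what the normalization $\alpha_{0}=1$ with $\alpha_{r}=(-1)^{r}q^{r(3r-1)/2}(1+q^{r})$ produces.
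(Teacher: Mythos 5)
Your proposal is correct and is essentially identical to the paper's proof: the paper also obtains \eqn{symNid2} by substituting the classical Bailey pair $\alpha_0=1$, $\alpha_n=(-1)^n q^{n(3n-1)/2}(1+q^n)$ for $n\ge1$, $\beta_n=1/(q)_n$ (cited from \cite[p.28]{An-CBMS-book}) into Theorem \thm{mainthm}. Your additional verification of the pair via the finite pentagonal number theorem is a correct, if unnecessary, supplement to the citation.
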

\begin{proof}
The result follows from Theorem \thm{mainthm} using the well-known Bailey pair
\cite[p.28]{An-CBMS-book}
$$
\alpha_n = \begin{cases} 1 & n=0,\\ (-1)^n q^{n(3n-1)/2}(1+q^n) & n\ge1, 
\end{cases}\qquad
\beta_n = \frac{1}{(q)_n}. 
$$
\end{proof}

\begin{cor}
\begin{align}
&\sum_{n=1}^\infty \left( \mu_{2k}(n) - \eta_{2k}(n)\right) q^n 
\mylabel{eq:sptkid}\\
&=
\sum_{n_k \ge n_{k-1} \ge \cdots \ge n_1 \ge 1}
\frac{q^{n_1 + n_2 + \cdots + n_k}}
{(1-q^{n_k})^2 (1-q^{n_{k-1}})^2 \cdots (1-q^{n_1})^2(q^{n_1+1};q)_\infty}.
\nonumber                
\end{align}
\end{cor}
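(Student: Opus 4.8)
The identity \eqn{sptkid} is obtained by assembling the two generating‑function formulas already in hand and performing one elementary cancellation, so no new Bailey‑pair input is needed. The plan is to write $\sum_n \mu_{2k}(n)q^n$ and $\sum_n \eta_{2k}(n)q^n$ over a common denominator $1/(q)_\infty$, subtract, and recognize the resulting numerator.

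First I would recall \eqn{Mukid}, which (after using \eqn{symMid}) gives
$$
\sum_{n=1}^\infty \mu_{2k}(n)\,q^n
= \frac{1}{(q)_\infty}
\sum_{n_k \ge \cdots \ge n_1 \ge 1}
\frac{q^{n_1+\cdots+n_k}}{(1-q^{n_k})^2\cdots(1-q^{n_1})^2}.
$$
Next I would use Andrews' formula \eqn{symNid}, which says
$(q)_\infty \sum_n \eta_{2k}(n)q^n = \sum_{n\ge1}(-1)^{n-1} q^{n(3n-1)/2 + kn}(1+q^n)/(1-q^n)^{2k}$. The key observation is that Corollary \eqn{symNid2} expresses precisely this theta‑type sum (with its sign) as the difference
$$
\sum_{n_k\ge\cdots\ge n_1\ge1}\frac{q^{n_1+\cdots+n_k}}{(1-q^{n_k})^2\cdots(1-q^{n_1})^2}
-
\sum_{n_k\ge\cdots\ge n_1\ge1}\frac{(q)_{n_1}\,q^{n_1+\cdots+n_k}}{(1-q^{n_k})^2\cdots(1-q^{n_1})^2},
$$
since $\sum_{n\ge1}(-1)^n q^{n(3n-1)/2+kn}(1+q^n)/(1-q^n)^{2k}$ is the negative of the sum in \eqn{symNid}. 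Hence $(q)_\infty\sum_n\eta_{2k}(n)q^n$ equals the first multiple sum minus the second.

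Subtracting, the ``plain'' multiple sums $\sum q^{n_1+\cdots+n_k}/\prod(1-q^{n_i})^2$ cancel, and I am left with
$$
\sum_{n=1}^\infty\bigl(\mu_{2k}(n)-\eta_{2k}(n)\bigr)q^n
= \frac{1}{(q)_\infty}
\sum_{n_k\ge\cdots\ge n_1\ge1}
\frac{(q)_{n_1}\,q^{n_1+\cdots+n_k}}{(1-q^{n_k})^2\cdots(1-q^{n_1})^2}.
$$
Finally I would use $(q)_{n_1}/(q)_\infty = 1/(q^{n_1+1};q)_\infty$ to rewrite the right‑hand side as the claimed sum in \eqn{sptkid}. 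There is no real obstacle here; the only point demanding care is the sign bookkeeping when matching the theta‑type series of \eqn{symNid} against the one appearing in \eqn{symNid2}, and confirming that the denominator $(1-q^n)^{2k}$ and the exponent $n(3n-1)/2+kn$ agree on both sides so that the cancellation is exact. All the substantive work (the iterated Bailey‑chain computation and the evaluation of the two standard Bailey pairs) has already been done in Theorem \thm{mainthm} and its corollaries.
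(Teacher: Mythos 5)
Your proposal is correct and follows essentially the same route as the paper: the paper likewise divides \eqn{symNid2} by $(q)_\infty$, identifies the two theta-type sums via \eqn{symMid2}, \eqn{symMid} and \eqn{symNid}, and uses $(q)_{n_1}/(q)_\infty = 1/(q^{n_1+1};q)_\infty$; your sign bookkeeping matching the $(-1)^n$ in \eqn{symNid2} against the $(-1)^{n-1}$ in \eqn{symNid} is exactly right.
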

\begin{proof}
After dividing both sides of \eqn{symNid2} by $(q)_\infty$ and using \eqn{symMid2} we 
have
\begin{align*}
&\sum_{n_k \ge n_{k-1} \ge \cdots \ge n_1 \ge 1}
\frac{q^{n_1 + n_2 + \cdots + n_k}}
{(1-q^{n_k})^2 (1-q^{n_{k-1}})^2 \cdots (1-q^{n_1})^2(q^{n_1+1};q)_\infty}
\\
&=
\frac{1}{(q)_\infty}\left(
\sum_{n=1}^\infty (-1)^{n-1} q^{n(n-1)/2 + kn} \frac{ (1+q^n) }
                                                    { (1-q^n)^{2k} }
-
\sum_{n=1}^\infty (-1)^{n-1} q^{n(3n-1)/2 + kn} \frac{ (1+q^n) }
                                                    { (1-q^n)^{2k} }
\right) \\
&=\sum_{n=1}^\infty \left( \mu_{2k}(n) - \eta_{2k}(n)\right) q^n,
\end{align*}
by \eqn{symMid} and \eqn{symNid}.
\end{proof}

\section{Rank and crank moment inequalities}\label{sec:MNineq}

In this section we prove the conjectured inequality \eqn{MNineq}
for rank and crank moments. We need to relate ordinary and
symmetrized moments. This is achieved by defining an analog
of Stirling numbers of the second kind. This approach was suggested by
Mike Hirschhorn.

We define a sequence of polynomials
$$
g_k(x) = \prod_{j=0}^{k-1} (x^2 - j^2),
$$
for $k\ge 1$. We want a sequence of numbers $S^{*}(n,k)$ such
that
$$
x^{2n} = \sum_{k=1}^n S^{*}(n,k) g_k(x),
$$
for $n\ge 1$. 
\begin{definition}
We define the sequence $S^{*}(n,k)$ ($1\ge k \ge n$) recursively by
\begin{enumerate}
\item
$S^{*}(1,1)=1$,
\item
$S^{*}(n,k)=0$ if $k\le 0$ or $k > n$, and
\item
$S^{*}(n+1,k) = S^{*}(n,k-1) + k^2 S^{*}(n,k)$,
for $1 \le k \le n+1$.
\end{enumerate}
\end{definition}

Below is a table of $S^{*}(n,k)$ for small n:

\begin{center}
$1$

$1$ \qquad $1$

$1$ \qquad $5$ \qquad $1$

$1$ \qquad $21$ \qquad $14$ \qquad $1$

$1$ \qquad $85$ \qquad $147$ \qquad $30$ \qquad $1$

$1$ \qquad $341$ \qquad $1408$ \qquad $649$ \qquad $55$ \qquad $1$
\end{center}

We note that if we replace $k^2$ by $k$ in the recurrence we obtain
the Stirling numbers of the second kind.
The numbers $S^{*}(n,k)$ first occur in a paper of 
MacMahon \cite[p.106]{MacM19}. Mikl\'os B\'ona reminded me that
Neil Sloane's Online Encylopedia of Integer Sequences \cite{Sl-OEIS}
can also handle $2$-dimensional sequences. One just needs to input the
first few terms of
\beq
\left\{\left\{ S^{*}(n,k)\right\}_{k=1}^n\right\}_{n=1}^\infty
= 1, 1, 1, 1, 5, 1, 1, 21, 14, 1, 1, 85, 147, 30, 1, \dots,
\mylabel{eq:Ssseq}
\eeq
to find the sequence labelled \texttt{A036969} \cite{Sl-A036969}, 
where more references
can be found.

We have
\begin{lemma}
\label{lem:stirling}
For $n\ge 1$,
$$
x^{2n} = \sum_{k=1}^n S^{*}(n,k) g_k(x).
$$
\end{lemma}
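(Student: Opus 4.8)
The plan is to prove the identity $x^{2n} = \sum_{k=1}^n S^{*}(n,k)g_k(x)$ by induction on $n$, driven by the defining recurrence $S^{*}(n+1,k) = S^{*}(n,k-1) + k^2 S^{*}(n,k)$ together with the elementary polynomial identity $x^2 g_k(x) = g_{k+1}(x) + k^2 g_k(x)$. The base case $n=1$ is immediate: $S^{*}(1,1)=1$ and $g_1(x) = x^2 - 0^2 = x^2$, so the claimed identity reads $x^2 = x^2$. For the inductive step, assume $x^{2n} = \sum_{k=1}^n S^{*}(n,k) g_k(x)$ and multiply both sides by $x^2$.

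First I would record the key algebraic fact about the polynomials $g_k$. Since $g_{k+1}(x) = g_k(x)\cdot(x^2 - k^2)$ by definition, we have $x^2 g_k(x) = g_{k+1}(x) + k^2 g_k(x)$. Substituting this into $x^{2(n+1)} = x^2 \sum_{k=1}^n S^{*}(n,k) g_k(x)$ gives
\begin{align*}
x^{2(n+1)} &= \sum_{k=1}^n S^{*}(n,k)\bigl(g_{k+1}(x) + k^2 g_k(x)\bigr)\\
&= \sum_{k=1}^n S^{*}(n,k) g_{k+1}(x) + \sum_{k=1}^n k^2 S^{*}(n,k) g_k(x).
\end{align*}
In the first sum I would shift the index $k \mapsto k-1$, turning it into $\sum_{k=2}^{n+1} S^{*}(n,k-1) g_k(x)$, and then combine the two sums term by term. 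Using $S^{*}(n,0)=0$ and $S^{*}(n,n+1)=0$ (from part (2) of the definition) to pad the ranges, the coefficient of $g_k(x)$ for $1 \le k \le n+1$ becomes exactly $S^{*}(n,k-1) + k^2 S^{*}(n,k)$, which by the recurrence is $S^{*}(n+1,k)$. Hence $x^{2(n+1)} = \sum_{k=1}^{n+1} S^{*}(n+1,k) g_k(x)$, completing the induction.

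This proof is essentially a bookkeeping exercise; there is no serious obstacle. The one point requiring a little care is the index shift and the matching of endpoints: one must check that the $k=1$ term (which receives no contribution from the shifted sum) is $k^2 S^{*}(n,1) g_1(x) = S^{*}(n,0)g_1(x) + 1^2 S^{*}(n,1)g_1(x)$ only because $S^{*}(n,0)=0$, and that the $k=n+1$ term (which receives no contribution from the unshifted sum) is $S^{*}(n,n)g_{n+1}(x) = S^{*}(n,n)g_{n+1}(x) + (n+1)^2 S^{*}(n,n+1) g_{n+1}(x)$ only because $S^{*}(n,n+1)=0$. With those conventions in hand, both boundary terms also fit the formula $S^{*}(n+1,k) = S^{*}(n,k-1)+k^2 S^{*}(n,k)$, so the identity holds for all $1 \le k \le n+1$ uniformly.
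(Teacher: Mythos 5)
Your proof is correct and follows essentially the same route as the paper: induction on $n$ using the polynomial identity $x^2 g_k(x) = g_{k+1}(x) + k^2 g_k(x)$, an index shift, and the defining recurrence for $S^{*}(n,k)$. Your treatment of the boundary terms via $S^{*}(n,0)=S^{*}(n,n+1)=0$ is in fact slightly more careful than the paper's own write-up.
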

\begin{proof}
We proceed by induction on $n$. The result is true for
$n=1$ since $S^{*}(1,1)=1$ and $g_1(x)=x^2$. We now suppose the result
is true for $n=m$, so that
$$
x^{2m} = \sum_{k=1}^m S^{*}(m,k) g_k(x).
$$
We have $g_{k+1}(x) = (x^2 - k^2) g_k(x)$ and
$$
x^2 g_k(x) = g_{k+1}(x) + k^2 g_k(x),
$$
for $k\ge 1$. Thus     
\begin{align*}
x^{2m+2} &= \sum_{k=1}^m S^{*}(m,k) x^2 g_k(x) \\
         &= \sum_{k=1}^m S^{*}(m,k)( g_{k+1}(x) + k^2 g_k(x))\\
         &= \sum_{k=1}^{m+1} (S^{*}(m,k-1) + k^2 S^{*}(m,k)) g_k(x))\\
         &=  \sum_{k=1}^{m} S^{*}(m+1,k) g_k(x),
\end{align*}
and the result is true for $n=m+1$ and true for all $n$ by
induction.
\end{proof}

We can now express ordinary moments in terms of symmetrized moments.

\begin{theorem}
For $k\ge 1$              
\begin{align}
\mu_{2k}(n) &= \frac{1}{(2k)!} \sum_{m=-n}^n g_k(m) \,M(m,n),
\mylabel{eq:M2symM}\\
\eta_{2k}(n) &= \frac{1}{(2k)!} \sum_{m=-n}^n g_k(m) \,N(m,n),
\mylabel{eq:N2symN}\\
M_{2k}(n) &= \sum_{j=1}^k (2j)! \, S^{*}(k,j) \, \mu_{2j}(n),
\mylabel{eq:symM2M}\\
N_{2k}(n) &= \sum_{j=1}^k (2j)! \, S^{*}(k,j) \, \eta_{2j}(n).  
\mylabel{eq:symN2N}
\end{align}
\end{theorem}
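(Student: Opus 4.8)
The plan is to prove the four identities by exploiting the definitions of the symmetrized moments together with Lemma \ref{lem:stirling}. The first two, \eqn{M2symM} and \eqn{N2symN}, are essentially bookkeeping: starting from
$$
\mu_{2k}(n) = \sum_{m=-n}^n \binom{m+k-1}{2k} M(m,n),
$$
I would observe that $\binom{m+k-1}{2k} = \frac{1}{(2k)!}(m+k-1)(m+k-2)\cdots(m-k)$, and that this product of $2k$ consecutive integers centered appropriately equals $\frac{1}{(2k)!}\prod_{j=0}^{k-1}(m^2-j^2) = \frac{1}{(2k)!} g_k(m)$. Pairing the factor $(m+j)$ with $(m-j)$ for $j=1,\dots,k-1$ and the factor $m$ with itself gives exactly $g_k(m)$; this is the one spot where one must be careful with the floor function $\lfloor\frac{2k-1}{2}\rfloor = k-1$ in the binomial and check the indexing matches. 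The same manipulation applied to $N(m,n)$ gives \eqn{N2symN}.

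For \eqn{symM2M} and \eqn{symN2N}, the idea is to invert the relation just established. Recall the ordinary moments are $M_{2k}(n) = \sum_{m=-n}^n m^{2k} M(m,n)$ (and similarly $N_{2k}(n)$ with $N(m,n)$). By Lemma \ref{lem:stirling}, $m^{2k} = \sum_{j=1}^{k} S^{*}(k,j)\, g_j(m)$, so
$$
M_{2k}(n) = \sum_{m=-n}^n \Bigl(\sum_{j=1}^{k} S^{*}(k,j)\, g_j(m)\Bigr) M(m,n)
= \sum_{j=1}^{k} S^{*}(k,j) \sum_{m=-n}^n g_j(m)\, M(m,n)
= \sum_{j=1}^{k} S^{*}(k,j)\,(2j)!\,\mu_{2j}(n),
$$
where the last step uses \eqn{M2symM}. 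Swapping $M$ for $N$ throughout and using \eqn{N2symN} gives \eqn{symN2N}. One should also note the $n=1$ case for the crank: since $M(m,1)$ is defined by $M(-1,1)=M(1,1)=1$, $M(0,1)=-1$, one can check directly that $g_k(m)$ evaluated on this sign pattern annihilates everything for $k\ge 2$ (because $g_k(0)=0$ and $g_k(1)=g_k(-1)=\prod_{j=0}^{k-1}(1-j^2)=0$ once $k\ge 2$), so the identities hold trivially there, and for $k=1$ one gets $M_2(1)=2=2!\cdot\mu_2(1)$.

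The main obstacle is not conceptual but lies in the first step: correctly verifying that the rising/falling binomial coefficient $\binom{m+\lfloor(2k-1)/2\rfloor}{2k}$ really equals $\frac{1}{(2k)!}\prod_{j=0}^{k-1}(m^2-j^2)$, i.e. that the $2k$ consecutive integers appearing are precisely $m-(k-1), m-(k-2),\dots, m, \dots, m+k$ — wait, that is $2k$ integers from $m-k+1$ to $m+k$, which are not symmetric about $m$. The resolution is that $(m-k+1)(m-k+2)\cdots(m+k)$ does factor as $m^2(m^2-1)\cdots$ only up to reindexing: one pairs $m+j$ with $m-j+1$? This needs care, and the cleanest route is to note the product of $2k$ consecutive integers $\prod_{i=-k+1}^{k}(m+i)$ can be rewritten, after the substitution grouping $(m+i)$ and $(m+1-i)$, but in fact the standard fact is $\prod_{i=-k}^{k-1}(m+i)$? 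I would resolve this by direct computation for small $k$ and then an explicit pairing argument, since this is exactly the point where the proof in the style of \cite[Theorem 2]{An07a} for the rank carries over verbatim. Once that algebraic identity is pinned down, the remaining steps are immediate linear-algebra manipulations.
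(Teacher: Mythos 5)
There is a genuine gap, and it sits exactly where you flagged it. The pointwise identity $\binom{m+\lfloor(2k-1)/2\rfloor}{2k}=\frac{1}{(2k)!}g_k(m)$ that your first step relies on is simply false: already for $k=1$ one has $\binom{m}{2}=\frac{m(m-1)}{2}\neq\frac{m^2}{2}=\frac{g_1(m)}{2}$. The $2k$ consecutive integers in the numerator are $m-k,m-k+1,\dots,m+k-1$; pairing $(m+j)(m-j)$ for $j=1,\dots,k-1$ leaves the two unpaired factors $m$ and $m-k$, so the product equals
\begin{equation*}
(m+k-1)\cdots(m-k)\;=\;\Bigl(\prod_{j=1}^{k-1}(m^2-j^2)\Bigr)\,m(m-k)\;=\;g_k(m)\;-\;km\prod_{j=1}^{k-1}(m^2-j^2),
\end{equation*}
and no amount of re-pairing will remove the odd correction term $-km\prod_{j=1}^{k-1}(m^2-j^2)$. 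The missing idea is that this correction is an \emph{odd} polynomial in $m$, while the coefficients satisfy $M(-m,n)=M(m,n)$ and $N(-m,n)=N(m,n)$ (this symmetry also holds for the ad hoc values of $M(m,1)$); hence the odd part contributes nothing to $\sum_{m=-n}^{n}$, and only the even part $g_k(m)$ survives. This is precisely how the paper closes the step, and without invoking that symmetry \eqn{M2symM} and \eqn{N2symN} cannot be established. Your "direct computation for small $k$ plus pairing" plan would fail because the identity you are trying to verify is not a polynomial identity at all.

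The second half of your argument --- inserting $m^{2k}=\sum_{j=1}^{k}S^{*}(k,j)g_j(m)$ from Lemma \ref{lem:stirling} into $M_{2k}(n)=\sum_m m^{2k}M(m,n)$, swapping the order of summation, and applying \eqn{M2symM} --- is correct and is exactly the paper's route to \eqn{symM2M} and \eqn{symN2N}. Once you repair the first step by the symmetry argument above, the whole proof goes through.
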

\begin{proof}
Suppose $k\ge1$. Then
\begin{align*}
\mu_{2k}(n) &= \sum_{m=-n}^n \binom{m+k-1}{2k} M(m,n)\\
&= \frac{1}{(2k)!}\sum_{m=-n}^n (m+k-1)(m+k-2)\cdots(m-k) M(m,n)\\
&= \frac{1}{(2k)!}\sum_{m=-n}^n 
(m^2-(k-1)^2)(m^2-(k-2)^2)\cdots(m^2-1)m(m-k) M(m,n)\\
&= \frac{1}{(2k)!}\sum_{m=-n}^n g_k(m) M(m,n), 
\end{align*}
since $M(-m,n)=M(m,n)$ for all $m$. This gives \eqn{M2symM} and
similarly \eqn{N2symN}.
Using Lemma \lem{stirling} and \eqn{M2symM} we see that
\begin{align*}
M_{2k} &= \sum_{m=-n}^n m^{2k} M(m,n)\\
       &= \sum_{m=-n}^n \left(\sum_{j=1}^k S^{*}(k,j) g_j(m)\right)M(m,n)\\
       &= \sum_{j=1}^k (2j)! \, S^{*}(k,j) \, \mu_{2j}(n),
\end{align*}
which is \eqn{symM2M}. Equation \eqn{symN2N} follows similarly.
\end{proof}

We can now deduce our crank-rank moment inequality.

\begin{cor}
For $1\le k \le n$ 
$$
M_{2k}(n) > N_{2k}(n).
$$
\end{cor}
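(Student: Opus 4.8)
The plan is to reduce the inequality to the nonnegativity of the power series coefficients in \eqn{sptkid}, which is manifest from the form of that identity. First I would subtract \eqn{symN2N} from \eqn{symM2M} to obtain
\[
M_{2k}(n) - N_{2k}(n) = \sum_{j=1}^{k} (2j)!\, S^{*}(k,j)\bigl(\mu_{2j}(n) - \eta_{2j}(n)\bigr),
\]
so it suffices to show that every summand is nonnegative and that at least one is strictly positive in the range $1 \le k \le n$. For the coefficients, a one-line induction on $k$ using part (3) of the definition shows $S^{*}(k,j) > 0$ for $1 \le j \le k$: the base case is $S^{*}(1,1) = 1$, and the recurrence only adds positive quantities; in particular $S^{*}(k,k) = 1$ and $(2j)!\,S^{*}(k,j) > 0$ throughout the range.

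Next I would address the factors $\mu_{2j}(n) - \eta_{2j}(n)$ via \eqn{sptkid} (with $j$ in place of $k$). Its right-hand side is a sum over $n_j \ge \cdots \ge n_1 \ge 1$ of
\[
\frac{q^{n_1 + \cdots + n_j}}{(1-q^{n_j})^2 \cdots (1-q^{n_1})^2 (q^{n_1+1};q)_\infty},
\]
and each such term is a product of the monomial $q^{n_1+\cdots+n_j}$, the series $\prod_i (1-q^{n_i})^{-2} = \prod_i \sum_{\ell \ge 0}(\ell+1)q^{n_i\ell}$, and the series $(q^{n_1+1};q)_\infty^{-1}$, all of which have nonnegative coefficients. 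Hence $\mu_{2j}(n) - \eta_{2j}(n) \ge 0$ for every $j \ge 1$ and every $n \ge 1$, and therefore $M_{2k}(n) - N_{2k}(n) \ge 0$.

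Finally, for the strict inequality I would isolate the $j = k$ term of the displayed sum, whose coefficient $(2k)!\,S^{*}(k,k) = (2k)!$ is positive. In \eqn{sptkid} itself, the single choice $n_1 = n_2 = \cdots = n_k = 1$ contributes $q^{k}/\bigl((1-q)^{2k}(q^{2};q)_\infty\bigr)$, and since $q^k/(1-q)^{2k} = \sum_{m \ge 0}\binom{m+2k-1}{2k-1}q^{m+k}$ while $(q^{2};q)_\infty^{-1}$ has constant term $1$ and nonnegative coefficients, the coefficient of $q^n$ in this contribution is strictly positive for every $n \ge k$; every remaining term in the sum has nonnegative coefficients. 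Thus $\mu_{2k}(n) - \eta_{2k}(n) > 0$ whenever $n \ge k$, and combining this with the nonnegativity of the other summands gives $M_{2k}(n) > N_{2k}(n)$ for $1 \le k \le n$. The only even mildly delicate point is this strictness step — everything else is bookkeeping with positive quantities; alternatively one could defer strictness to the combinatorial description of $\spt_k(n)$ proved later, but the argument above is self-contained given \eqn{sptkid}.
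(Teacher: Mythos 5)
Your proposal is correct and follows essentially the same route as the paper: both express $M_{2k}(n)-N_{2k}(n)=\sum_{j=1}^{k}(2j)!\,S^{*}(k,j)\bigl(\mu_{2j}(n)-\eta_{2j}(n)\bigr)$, use the positivity of $S^{*}(k,j)$, and read off nonnegativity of each $\mu_{2j}(n)-\eta_{2j}(n)$ from the manifestly nonnegative expansion in \eqn{sptkid}. The only (immaterial) difference is that you extract strictness from the $j=k$ summand, valid for $n\ge k$, whereas the paper isolates the $j=1$ summand $2\bigl(\mu_{2}(n)-\eta_{2}(n)\bigr)=2\,\spt(n)>0$, which gives the strict inequality for all $n\ge 1$.
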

\begin{proof}
Suppose $k\ge1$. Then from \eqn{sptkid} we have
$$
\sum_{n=1}^\infty \left( \mu_{2j}(n) - \eta_{2j}(n)\right) q^n  =
\frac{q^j}{(1-q)^{2j} (q^2;q)_\infty} + \cdots,
$$
and we see that
$$
\mu_{2j}(n) > \eta_{2j}(n),
$$
for all $n\ge j\ge 1$. Now using \eqn{symM2M}, \eqn{symN2N} and the 
fact that the coefficients $S^{*}(k,j)$ are positive integers we have
\begin{align*}
M_{2k}(n) - N_{2k}(n) &=
\sum_{j=1}^k (2j)!\, S^{*}(k,j) \,\left(\mu_{2j}(n) - \eta_{2j}(n)\right)\\
& \ge 2 \left(\mu_{2}(n) - \eta_{2}(n)\right) > 0,
\end{align*}
for all $n\ge 1$.
\end{proof}

\section{Higher order spt-functions}\label{sec:sptk}

In this section we define
a higher-order spt function $\spt_k(n)$ so that
$$
\spt_k(n) = \mu_{2k}(n) - \eta_{2k}(n),
$$
for all $k\ge1$ and $n\ge1$. 
The idea is to interpret the right side of \eqn{sptkid} in terms of 
partitions. 

\begin{definition}
For a partition $\pi$ with $m$ different  parts
$$
n_1 < n_2 < \cdots < n_m,
$$
we define $f_j=f_j(\pi)$ to be the frequency of part $n_j$ for 
$1 \le j \le m$.
\end{definition}
We note that $f_1=f_1(\pi)$ is the number of smallest parts in the partition
$\pi$ and Andrews' function
$$
\spt(n) = \sum_{\pi \vdash n} f_1(\pi).
$$

\begin{definition}
Let $k\ge 1$. For a partition $\pi$ we define a weight
$$
\omega_k(\pi) = 
\sum_{\substack{ m_1 + m_2 + \cdots + m_r = k \\ 1 \le r \le k}}
\binom{f_1 + m_1 -1}{2m_1 -1}
\sum_{2 \le j_2 < j_3 < \cdots < j_r}
\binom{f_{j_2} + m_2}{2m_2} \binom{f_{j_3} + m_3}{2m_3}
\cdots
\binom{f_{j_r} + m_r}{2m_r},
$$
and
$$
\spt_k(n) = \sum_{\pi \vdash n} \omega_k(\pi).
$$
\end{definition}
We note that the outer sum above is over all compositions 
$m_1 + m_2 + \cdots + m_r$ of $k$.

\begin{example}[$k=1$]
There is only one composition of $1$, $\omega_1(\pi)=f_1(\pi)$ and
$$
\spt_1(n) = \spt(n).
$$
\end{example}

\begin{example}[$k=2$]
There are two compositions of $2$, namely $2$ and $1+1$, 
$$
\omega_2(\pi) = \binom{f_1+1}{3} + f_1 \sum_{2\le j} \binom{f_j+1}{2},
$$
and
$$
\spt_2(n) = \sum_{\pi \vdash n} \omega_2(\pi).
$$
We calculate $\spt_2(4)$. There are five partitions of $4$:
$$
\begin{array}{lll}
4 & f_1=1 & \omega_2 = 0\\
3+1 & f_1=f_2=1 & \omega_2 = 1\\
2+2 & f_1=2 & \omega_2 = 1\\
2+1+1 & f_1=2, f_2=1 & \omega_2 = 1+2=3\\
1+1+1+1 & f_1=4 & \omega_2 = 10
\end{array}
$$
Hence $\spt_2(4) = 0 + 1 + 1 + 3 + 10 = 15$.
\end{example}

\begin{example}[$k=3$]
There are four compositions of $3$, namely $3$, $2+1$,  $1+2$ and $1+1+1+1$.
Hence the definition of $\omega_3(\pi)$ has four terms:
$$
\omega_3(\pi) = \binom{f_1+2}{5} + 
                 \binom{f_1 + 1}{3} \sum_{2\le j} \binom{f_j+1}{2} +
                 f_1 \sum_{2\le j} \binom{f_j+2}{4} +
                 f_1 \sum_{2\le j < k} \binom{f_j+1}{2}\binom{f_k+1}{2},
$$
and
$$
\spt_3(n) = \sum_{\pi \vdash n} \omega_3(\pi).
$$
To illustrate, we calculate $\spt_3(5)$. There are seven partitions of $5$:
$$
\begin{array}{lll}
5 & f_1=1 & \omega_3 = 0\\
4+1 & f_1=f_2=1 & \omega_3 = 0\\
3+2 & f_1=f_2=1 & \omega_3 = 0\\
3+1+1 & f_1=2, f_2=1 & \omega_3 = 1\\
2+2+1 & f_1=1, f_2=2 & \omega_3 = 1\\
2+1+1+1 & f_1=3, f_2=1 & \omega_3 = 1 + 4 = 5\\
1+1+1+1+1 & f_1=5& \omega_3 = 21\\
\end{array}
$$
Hence $\spt_3(5) = 0 + 0 + 0 + 1 +1 + 5 + 21 = 28$.
\end{example}

Our goal in this section is to prove
\begin{theorem}
\label{thm:sptkthm}
For $1\le k \le n$
$$
\spt_k(n) = \mu_k(n) - \eta_k(n).
$$
\end{theorem}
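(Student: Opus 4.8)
The plan is to take the series identity \eqn{sptkid} as the starting point and to show that its right-hand side is exactly $\sum_{n\ge1}\spt_k(n)\,q^n$; comparing coefficients of $q^n$ then yields Theorem \thm{sptkthm} (the range $1\le k\le n$ being the only interesting one, since both sides are $0$ when $n<k$). The first step is a change of summation variable: a weakly decreasing chain $n_k\ge n_{k-1}\ge\cdots\ge n_1\ge1$ is the same data as a set of distinct values $v_1<v_2<\cdots<v_r$ with $1\le r\le k$, together with positive multiplicities $m_1,\dots,m_r$ satisfying $m_1+\cdots+m_r=k$, where $v_i$ occurs $m_i$ times and $n_1=v_1$. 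Collecting equal factors and using $1/(q^{v_1+1};q)_\infty=\prod_{v>v_1}(1-q^{v})^{-1}$, the right side of \eqn{sptkid} becomes
$$
\sum_{r=1}^{k}\ \sum_{\substack{m_1+\cdots+m_r=k\\ m_i\ge1}}\ \ \sum_{1\le v_1<v_2<\cdots<v_r}
\frac{q^{m_1v_1}}{(1-q^{v_1})^{2m_1}}\ \prod_{i=2}^{r}\frac{q^{m_iv_i}}{(1-q^{v_i})^{2m_i}}\ \prod_{v>v_1}\frac{1}{1-q^{v}}.
$$

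Next I would expand each factor as a power series in $q$ and read off its combinatorial meaning. For each $i\ge2$, I would peel one factor $(1-q^{v_i})^{-1}$ off the Euler product $\prod_{v>v_1}(1-q^{v})^{-1}$ (legitimate since $v_i>v_1$) and merge it with the factor belonging to $v_i$, and then apply the elementary expansions
$$
\frac{q^{mv}}{(1-q^{v})^{2m}}=\sum_{f\ge1}\binom{f+m-1}{2m-1}q^{fv},
\qquad
\frac{q^{mv}}{(1-q^{v})^{2m+1}}=\sum_{f\ge1}\binom{f+m}{2m}q^{fv}.
$$
A generic monomial in the resulting multiple sum is then indexed by the values $v_1<\cdots<v_r$, a frequency $f_i\ge1$ attached to each $v_i$, and a frequency $h_v\ge0$ attached to every remaining $v>v_1$; this is exactly a partition $\pi$ whose smallest part is $v_1$ (forced, since every factor other than the one for $v_1$ produces only parts $>v_1$), contributing $q^{\abs{\pi}}$ with weight $\binom{f_1+m_1-1}{2m_1-1}\prod_{i=2}^{r}\binom{f_i+m_i}{2m_i}$.

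The final step is to collect contributions partition by partition. Fixing $\pi\vdash n$ with distinct parts $N_1<\cdots<N_M$ and frequencies $F_1,\dots,F_M$, the value $v_1=N_1$ is forced, the other selected values $v_2<\cdots<v_r$ amount to a choice of indices $2\le j_2<\cdots<j_r\le M$, and the leftover parts of $\pi$ are produced with weight $1$ by the $h_v$'s. Summing $\binom{F_1+m_1-1}{2m_1-1}\prod_{i=2}^{r}\binom{F_{j_i}+m_i}{2m_i}$ over all compositions $m_1+\cdots+m_r=k$ and all index choices $2\le j_2<\cdots<j_r\le M$ reproduces precisely $\omega_k(\pi)$. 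Hence the right side of \eqn{sptkid} equals $\sum_{n\ge1}\big(\sum_{\pi\vdash n}\omega_k(\pi)\big)q^n=\sum_{n\ge1}\spt_k(n)\,q^n$, and Theorem \thm{sptkthm} follows.

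I expect the main obstacle to be the bookkeeping in the last two steps. The key point is that each occurrence of a selected part $v_i$ with $i\ge2$ may come either from the squared factor $q^{m_iv_i}/(1-q^{v_i})^{2m_i}$ (which carries a binomial weight and forces at least one occurrence) or from the Euler tail (weight $1$), and one must check that convolving these two sources at a fixed total frequency $f$ collapses to the single binomial $\binom{f+m_i}{2m_i}$ --- that is, the hockey-stick identity $\sum_{g=1}^{f}\binom{g+m_i-1}{2m_i-1}=\binom{f+m_i}{2m_i}$. This is exactly what produces the asymmetry between $v_1$, which receives no Euler contribution and hence weight $\binom{f_1+m_1-1}{2m_1-1}$, and the $v_i$ with $i\ge2$, which receive weight $\binom{f_i+m_i}{2m_i}$, matching the definition of $\omega_k$ term for term. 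One must also verify that distinct triples $(r,\{m_i\},\{v_i\})$ with distinct frequency assignments never yield the same decorated partition, so that the total count is $\omega_k(\pi)$ on the nose with no over- or under-counting.
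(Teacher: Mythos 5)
Your proposal is correct and is essentially the paper's own argument: both decompose the right-hand side of \eqn{sptkid} according to the composition of $k$ recording which of the $n_i$ coincide, peel one factor per selected part off the Euler product $(q^{n_1+1};q)_\infty^{-1}$, expand via the two binomial series, and reassemble the weights partition-by-partition into $\omega_k(\pi)$ (the paper carries this out explicitly for $k=4$ and declares the general case analogous). Your binomials $\binom{f_{j_i}+m_i}{2m_i}$ are the correct ones, matching the definition of $\omega_k$; the $\binom{f_{j_i}+m_i-1}{2m_i}$ appearing in the paper's $k=4$ display is a typo.
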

\begin{proof}
First we need the elementary
identities
$$                   
\sum_{n=j}^\infty \binom{n+j-1}{2j-1}x^n = \frac{x^j}{(1-x)^{2j}}\quad
\mbox{and}\quad 
\sum_{n=j}^\infty \binom{n+j}{2j}x^n = \frac{x^j}{(1-x)^{2j+1}}.
$$
To give the idea of the proof we first consider the 
case $k=4$. 
From \eqn{sptkid} we have
\begin{align*}
&\sum_{n=4}^\infty \left( \mu_{8}(n) - \eta_{8k}(n)\right) q^n \\
& = 
\sum_{1\le  m \le j \le k \le n}
 \frac{q^{m+j+k+n}}{
(1-q^m)^2 (1-q^j)^2 (1-q^k)^2 (1-q^n)^2 (q^{m+1};q)_\infty}\\
& = 
\sum_{1 \le m=j=k=n } + 
\sum_{1 \le m=j=k<n } + 
\sum_{1 \le m=j<k=n } + 
\sum_{1 \le m<j=k=n } + \\ 
& \quad \sum_{1 \le m=j<k<n } + 
\sum_{1 \le m<j=k<n } + 
\sum_{1 \le m<j<k=n } + 
\sum_{1 \le m<j<k<n }   
\\
& =
\sum_{m=1}^\infty \frac{q^{4m}}{(1-q^m)^8}
\prod_{i> m} \frac{1}{(1-q^i)}
+
\sum_{1\le m < n} \frac{q^{3m}}{(1-q^m)^6} \frac{q^n}{(1-q^n)^3} 
\prod_{\substack{i> m\\ i\ne n}} \frac{1}{(1-q^i)} \\
& +
\sum_{1\le m < n} \frac{q^{2m}}{(1-q^m)^4} \frac{q^{2n}}{(1-q^n)^5} 
\prod_{\substack{i> m\\ i\ne n}} \frac{1}{(1-q^i)} 
 +
\sum_{1\le m < n} \frac{q^{m}}{(1-q^m)^2} \frac{q^{3n}}{(1-q^n)^7} 
\prod_{\substack{i> m\\ i\ne n}} \frac{1}{(1-q^i)} \\
& +
\sum_{1\le m < k < n} \frac{q^{2m}}{(1-q^m)^4} \frac{q^{k}}{(1-q^k)^3} \frac{q^{n}}{(1-q^n)^3} 
\prod_{\substack{i> m\\ i\ne k,n}} \frac{1}{(1-q^i)} 
\\
& +
\sum_{1\le m < k < n} \frac{q^{m}}{(1-q^m)^2} \frac{q^{2k}}{(1-q^k)^5} \frac{q^{n}}{(1-q^n)^3} 
\prod_{\substack{i> m\\ i\ne k,n}} \frac{1}{(1-q^i)} \\
& +
\sum_{1\le m < k < n} \frac{q^{m}}{(1-q^m)^2} \frac{q^{k}}{(1-q^k)^3} \frac{q^{2n}}{(1-q^n)^5} 
\prod_{\substack{i> m\\ i\ne k,n}} \frac{1}{(1-q^i)} \\
& +
\sum_{1\le m < j < k < n} \frac{q^{m}}{(1-q^m)^2} \frac{q^{j}}{(1-q^k)^3} 
\frac{q^{k}}{(1-q^k)^3} \frac{q^{n}}{(1-q^n)^3} 
\prod_{\substack{i> m\\ i\ne j,k,n}} \frac{1}{(1-q^i)}.
\end{align*}
There are eight compositions of $4$: $4$, $3+1$, $2+2$, $1+3$, $2+1+1$,
$1+2+1$, $1+1+2$, and $1+1+1+1$.
Each of the eight sums above has the form 
$$
\sum_{1 \le n_1 < n_{j_2} < \cdots < n_{j_r}}
\frac{q^{m_1 n_1}}{(1-q^{n_1})^{2 m_1}}
\frac{q^{m_2 n_{j_2}}}{(1-q^{n_{j_2}})^{2 m_2+1}}
\cdots
\frac{q^{m_r n_{j_r}}}{(1-q^{n_{j_r}})^{2 m_r+1}}
\prod_{\substack{i> n_1\\ i\not\in\{n_{j_2},\dots,n_{j_r}\}}} \frac{1}{(1-q^i)},
$$
where $m_1 + m_2 + \cdots + m_r$ is a composition of $k=4$.
This sum can be written as
\begin{align*}
&\sum_{\substack{{1 \le n_1 < n_{j_2} < \cdots < n_{j_r}}\\
{f_1\ge m_1, f_{j_2}\ge m_{2}, \dots f_{j_r}\ge m_{r}} }}
\binom{f_1 + m_1 -1 }{2m_1 - 1} 
\binom{f_{j_2} + m_2 -1 }{2m_2} 
\cdots 
\binom{f_{j_r} + m_r -1 }{2m_r} \\
&\qquad \qquad \qquad \qquad \qquad \qquad\cdot \, q^{f_1 n_1 + f_{j_2} n_{j_2} + \cdots + f_{j_r} n_{j_r}}
\prod_{\substack{i> n_1\\ i\not\in\{n_{j_2},\dots,n_{j_r}\}}} \frac{1}{(1-q^i)}.
\end{align*}
We see that this is the generating function for certain weighted partitions
in which $n_1$ is the smallest part, $n_1 < n_{j_2} < \cdots < n_{j_r}$ is an
$r$-subset of the parts of the partition, and $f_j$ is the frequency of part $n_j$
for each $j$. It follows that
$$                    
\sum_{n=1}^\infty \left( \mu_{8}(n) - \eta_{8k}(n)\right) q^n 
= 
\sum_{\pi \vdash n} \omega_4(\pi) = \sum_{n=4}^\infty \spt_4(n) q^n.
$$ 
The proof of the general case is completely analogous.
Now suppose $k\ge 1$. From \eqn{sptkid} we have
\begin{align*}
&\sum_{n=1}^\infty \left( \mu_{2k}(n) - \eta_{2k}(n)\right) q^n \nonumber\\
&=
\sum_{1\le n_1 \le n_{2} \le \cdots \le n_{k}}
\frac{q^{n_1 + n_2 + \cdots + n_k}}
{(1-q^{n_1})^2 (1-q^{n_{2}})^2 \cdots (1-q^{n_k})^2(q^{n_1+1};q)_\infty}.
\end{align*}
We partition this sum into $2^{k-1}$ subsums by changing each ``$\le$" in 
the general inequality 
${n_1 \le n_{2} \le \cdots \le n_k}$ to either ``$=$" or ``$<$".
In this way each subsum corresponds to a unique composition 
$m_1 + m_2 + \cdots + m_r$ of $k$ (where $1 \le r \le k$).
We proceed just as in the case $k=4$ and the general result follows.
\end{proof}

\section{Congruences for higher order spt-functions}\label{sec:congsptk}

In \cite{Br-Ga-Ma} it was shown that given any prime $\ell>3$ with $k$ and $j$ fixed 
there are infinitely many arithmetic progressions $An+B$ such that
$$
\eta_{2k}(An + B) \equiv 0 \pmod{\ell^j}.
$$
Using known results for crank moments \cite[\S 7]{Br-Ga-Ma} and standard
techniques \cite{Br-Ga-Ma}, \cite{Br08} we may deduce the analog of this result for 
higher order spt-functions.
In this section we prove a number of nice explicit congruences for  higher order 
spt-functions.
Many of the congruences follow from known results for rank and
crank moments \cite{At-Ga}.      

\begin{theorem}
\begin{align}
\spt_2(n) &\equiv 0 \pmod{5},\quad \mbox{if $n\equiv 0,1,4\pmod{5}$}
\mylabel{eq:spt2mod5}\\
\spt_2(n) &\equiv 0 \pmod{7},\quad \mbox{if $n\equiv 0,1,5\pmod{7}$}
\mylabel{eq:spt2mod7}\\
\spt_2(n) &\equiv 0 \pmod{11},\quad \mbox{if $n\equiv 0\pmod{11}$}.
\mylabel{eq:spt2mod11}
\end{align}
\end{theorem}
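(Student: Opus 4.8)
The plan is to reduce these congruences to known facts about the second and fourth rank and crank moments. Specializing \eqn{symM2M} and \eqn{symN2N} to $k=1$ and $k=2$, and using the table values $S^{*}(1,1)=S^{*}(2,1)=S^{*}(2,2)=1$, gives $M_2(n)=2\mu_2(n)$, $N_2(n)=2\eta_2(n)$, $M_4(n)=2\mu_2(n)+24\mu_4(n)$, and $N_4(n)=2\eta_2(n)+24\eta_4(n)$. Subtracting in pairs and using $\mu_{2j}(n)-\eta_{2j}(n)=\spt_j(n)$ (Theorem~\thm{sptkthm}), we obtain
\[
24\,\spt_2(n)=\bigl(M_4(n)-N_4(n)\bigr)-\bigl(M_2(n)-N_2(n)\bigr).
\]
Since $\gcd(24,\ell)=1$ for $\ell\in\{5,7,11\}$, each of \eqn{spt2mod5}--\eqn{spt2mod11} is equivalent to the assertion that $M_4(n)-N_4(n)\equiv M_2(n)-N_2(n)\pmod{\ell}$ on the relevant arithmetic progression, and this is what I would prove.

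First I would dispose of the two progressions on which a Ramanujan congruence for $p(n)$ holds, namely $n=5m+4$ (the class $n\equiv4\pmod5$) and $n=7m+5$ (the class $n\equiv5\pmod7$). By Dyson's rank conjecture, proved by Atkin and Swinnerton-Dyer, together with its crank analogue \cite{Ga88a}, every residue class modulo $\ell$ is the rank, and also the crank, of exactly $p(\ell m+b)/\ell$ of the partitions of $\ell m+b$ for $(\ell,b)\in\{(5,4),(7,5)\}$. Grouping the moment sums according to the residue of the rank (resp.\ crank) modulo $\ell$ then gives, for every $\kappa\ge1$,
\[
M_{2\kappa}(\ell m+b)\;\equiv\;N_{2\kappa}(\ell m+b)\;\equiv\;\frac{p(\ell m+b)}{\ell}\sum_{r=1}^{\ell-1}r^{2\kappa}\pmod{\ell}.
\]
In particular $M_4-N_4$ and $M_2-N_2$ each vanish modulo $\ell$ there, so $\spt_2(n)\equiv0\pmod{\ell}$; this settles \eqn{spt2mod5} for $n\equiv4\pmod5$ and \eqn{spt2mod7} for $n\equiv5\pmod7$.

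For the remaining classes --- $n\equiv0,1\pmod5$, $n\equiv0,1\pmod7$, and $n\equiv0\pmod{11}$ --- there is no partition congruence to exploit, and here I would use the congruences for rank and crank moments modulo $5$, $7$, and $11$ due to Atkin and Garvan \cite{At-Ga}. Concretely, combining \eqn{sptkid} (with $k=2$) with \eqn{symMid} and \eqn{symNid} shows that
\[
(q)_\infty\sum_{n=1}^\infty\spt_2(n)q^n=\sum_{n=1}^\infty(-1)^{n-1}\Bigl(q^{n(n-1)/2+2n}-q^{n(3n-1)/2+2n}\Bigr)\frac{1+q^n}{(1-q^n)^4};
\]
reducing modulo $\ell$ via $(1-q^n)^{-4}\equiv(1-q^n)^{\ell-4}(1-q^{\ell n})^{-1}$ and the classical $\ell$-dissection of $1/(q)_\infty$ (the Rogers--Ramanujan dissection for $\ell=5$ and its analogues for $\ell=7,11$) turns the claim on each residue class into the assertion that a specific eta-quotient has $q$-expansion supported away from that class. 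Equivalently, $(q)_\infty\sum_n M_{2k}(n)q^n$ and $(q)_\infty\sum_n N_{2k}(n)q^n$ are quasimodular forms on $\SL_2(\Z)$ of weight at most $2k$, so their reductions modulo $\ell$ lie in a finite-dimensional space and the required congruence of $q$-expansions can be verified on finitely many coefficients.

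I expect the main obstacle to be the prime $\ell=11$: there is only one admissible residue class, $11$ is the largest of the three moduli, and no Ramanujan congruence is available, so one must carry out the level-$11$ dissection (or the equivalent quasimodular-forms computation) in full. The cases $n\equiv0,1$ modulo $5$ and $7$ are entirely analogous but computationally lighter. The delicate point throughout is keeping track of which residue classes modulo $\ell$ actually receive a nonzero contribution after the dissection, so that one recovers exactly the progressions listed in \eqn{spt2mod5}--\eqn{spt2mod11} and no others.
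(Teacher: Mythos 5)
Your reduction to $24\,\spt_2(n)=\bigl(M_4(n)-N_4(n)\bigr)-\bigl(M_2(n)-N_2(n)\bigr)$ is correct and matches the paper's starting point, and your equidistribution argument genuinely disposes of the classes $n\equiv4\pmod5$ and $n\equiv5\pmod7$ (rank and crank are each equidistributed mod $\ell$ there, so every even moment of each statistic reduces to the same quantity mod $\ell$ and the differences cancel). But that covers only two of the six cases, and for the remaining four --- $n\equiv0,1\pmod5$, $n\equiv0,1\pmod7$, and the entire mod $11$ statement --- you offer a plan rather than a proof, and the plan rests on a false premise: $(q)_\infty\sum_n N_{2k}(n)q^n$ is \emph{not} a quasimodular form on $\SL_2(\Z)$ for $k\ge1$. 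The rank moment generating functions are quasimock (this is the content of \cite{Br08} and \cite{Br-Ga-Ma}, and it is precisely why these congruences are not routine); only the crank moments satisfy the statement of \cite[Theorem 4.2]{At-Ga}. So the ``finite-dimensional space, check finitely many coefficients'' fallback is not available for $M_4-N_4$, and the dissection route you sketch would, after reducing $(1-q^n)^{-4}$ mod $\ell$, land you on rank-versus-crank residue identities $N(r,\ell,n)-M(r,\ell,n)$ on progressions where no Ramanujan congruence holds --- identities you neither state nor prove, and which are not known in the form you would need (especially for $\ell=11$).

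The missing ingredient is the Atkin--Garvan linear relation \cite[(5.6)]{At-Ga},
\begin{equation*}
N_4(n)=-\tfrac{2}{3}(3n+1)M_2(n)+\tfrac{8}{3}M_4(n)+(1-12n)N_2(n),
\end{equation*}
which eliminates the fourth \emph{rank} moment entirely and gives $24\,\spt_2(n)=(2n-\tfrac13)M_2(n)-\tfrac53 M_4(n)+12n\,N_2(n)$. From there everything reduces to objects that are under control: $M_2(n)=2np(n)$, the congruences for $N_2(n)$ mod $5$ and $7$ from \cite{Ga10a}, and the crank-moment congruences \cite[(1.21),(6.5),(6.6)]{At-Ga} relating $M_4$ to $M_2$ mod $7$ and $11$ (note that for $\ell=5$ the $M_4$ term simply drops out, and for $n\equiv0\pmod{11}$ the $N_2$ term is killed by the factor $12n$). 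Without some substitute for this relation, your argument does not close the remaining cases.
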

\begin{proof}
By definition,
$$
\spt_2(n) = \mu_4(n) - \eta_4(n) = \frac{1}{24}(M_4(n) - M_2(n) - N_4(n) 
+ N_2(n)).
$$
From \cite[(5.6)]{At-Ga} we have
$$
N_4(n) = -\frac{2}{3}(3n+1) M_2(n) + \frac{8}{3} M_4(n) + (1-12n) N_2(n),
$$
and
\begin{equation}
24 \spt_2(n) = (2n - \tfrac{1}{3}) M_2(n) - \frac{5}{3} M_4(n) + 12n N_2(n).
\mylabel{eq:spt2id}
\end{equation}
The congruence \eqn{spt2mod5} now follows from
\begin{align*}
M_2(n) &= 2n p(n), \qquad \mbox{(\cite[(1.27)]{At-Ga}}\\
N_2(n) &\equiv (n+4) p(n),\quad \mbox{for $n\not\equiv0,3\pmod{5}$}
\qquad \mbox{(\cite[p.285]{Ga10a})},\\
p(5n+4) &\equiv 0 \pmod{5}.
\end{align*}

To begin the proof of \eqn{spt2mod7} we use \eqn{spt2id} to obtain
$$
\spt_2(n) \equiv M_4(n) + 3(n+1) M_2(n) + 4n N_2(n) \pmod{7}.
$$
From \cite[p.285]{Ga10a}
\begin{equation}
N_2(n) \equiv (6n+1) p(n) \pmod{7},\quad \mbox{for $n\not\equiv0,2,6\pmod{7}$}
\mylabel{eq:N2mod7}
\end{equation}
so that
\begin{equation}
\spt_2(n) \equiv M_4(n) + 3(n+1) M_2(n)\pmod{7},\quad 
\mbox{for $n\equiv0,1,5\pmod{7}$.}
\mylabel{eq:spt2mod7id}
\end{equation}
From \cite[(1.21)]{At-Ga} we have
$$
M_4(7n+5) \equiv M_2(7n+5) \equiv 0\pmod{7},\quad\mbox{and}\quad
\spt_2(7n+5)\equiv0\pmod{7}.
$$
From \cite[(6.5)]{At-Ga}
\begin{equation}
(n+2) M_4(n) \equiv -(6n^2+4n+1) M_2(n)\pmod{7},
\mylabel{eq:M42mod7}
\end{equation}
so that
\begin{align}
M_4(7n)&\equiv 3 M_2(7n) \equiv 0 \pmod{7},\quad \mbox{(since $M_2(n)=2np(n)$)},
\mylabel{eq:M420mod7}\\
M_4(7n+1) &\equiv M_2(7n+1) \pmod{7},
\mylabel{eq:M421mod7}
\end{align}
and
$$
\spt_2(7n)\equiv\spt_2(7n+1)\equiv0\pmod{7},
$$
by \eqn{spt2mod7id}.

The proof of \eqn{spt2mod11} is similar to that of \eqn{spt2mod5}
and \eqn{spt2mod7}.
From \eqn{spt2id} we have
$$
\spt_2(n) \equiv M_4(n) + (n+9) M_2(n) + 6n N_2(n) \pmod{11}.
$$
From \cite[(6.6)]{At-Ga}
$$
(n+5)^3 M_4(n) \equiv (5n^4 + 10n^3 + 8n^2 + 8n+ 9) M_2(n)\pmod{11},
$$
so that
$$
M_4(11n)\equiv M_2(11n)\equiv0\pmod{11},
$$
and
$$
\spt_2(11n)\equiv0\pmod{11}.
$$
\end{proof}

\begin{theorem}
\begin{align}
\spt_3(n) &\equiv 0 \pmod{7},\quad \mbox{if $n\not\equiv 3,6\pmod{7}$},
\mylabel{eq:spt3mod7}\\
\spt_3(n) &\equiv 0 \pmod{2},\quad \mbox{if $n\equiv 1\pmod{4}$.}
\mylabel{eq:spt3mod2}
\end{align}
\end{theorem}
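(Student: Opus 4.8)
The plan is to pass from $\spt_3$ to ordinary rank and crank moments and then apply the Atkin--Garvan congruence machinery, in parallel with the proof of \eqn{spt2mod5}--\eqn{spt2mod11}. Inverting \eqn{symM2M} and \eqn{symN2N} along the row $S^{*}(3,1)=1$, $S^{*}(3,2)=5$, $S^{*}(3,3)=1$ gives $720\,\mu_6(n)=M_6(n)-5M_4(n)+4M_2(n)$ and the same identity with $\eta_{2k},N_{2k}$ in place of $\mu_{2k},M_{2k}$, so that
$$
720\,\spt_3(n)=\bigl(M_6(n)-N_6(n)\bigr)-5\bigl(M_4(n)-N_4(n)\bigr)+4\bigl(M_2(n)-N_2(n)\bigr).
$$
Using the rank--crank relations of \cite{At-Ga} (the analogues for $N_4(n)$ and $N_6(n)$ of the $N_4$-identity quoted before \eqn{spt2id}) I would eliminate $N_4(n)$ and $N_6(n)$, so that $720\,\spt_3(n)$ becomes an explicit combination, with coefficients polynomial in $n$, of $M_2(n)$, $M_4(n)$, $M_6(n)$ and $N_2(n)$.

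For \eqn{spt3mod7}: since $720\equiv-1\pmod 7$, this identity already determines $\spt_3(n)\bmod 7$ in terms of those four moments. I would substitute $M_2(n)=2np(n)$ (\cite[(1.27)]{At-Ga}), use \eqn{M42mod7} together with $M_4(7n+5)\equiv M_2(7n+5)\equiv0\pmod 7$ (\cite[(1.21)]{At-Ga}), the corresponding mod $7$ reduction of $M_6(n)$ in terms of $M_2(n)$ from \cite[\S6]{At-Ga}, and \eqn{N2mod7}; on the classes $n\equiv1,4,5\pmod 7$ these collapse $\spt_3(n)$ to an explicit polynomial in $n$ times $p(n)$, which one checks is $\equiv0\pmod 7$. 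The classes $n\equiv0$ and $n\equiv2\pmod 7$ are handled separately --- for $n\equiv0$ via $M_2(7n)\equiv M_4(7n)\equiv M_6(7n)\equiv0\pmod 7$ and the factor $n$ in the $N_2$-coefficient (as in \eqn{spt2id}), and for $n\equiv2$ via the complementary vanishing of the polynomial coefficients, re-expressing $N_2$ if needed through $N_2(n)=2np(n)-2\,\spt(n)$ (equation \eqn{sptid}).

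For \eqn{spt3mod2} the moment identity above is useless, since $16\mid 720$ and the reduction destroys all $2$-adic content; instead I would reduce the \emph{definition} $\spt_3=\mu_6-\eta_6$ modulo $2$. By Lucas's theorem $\binom{m+2}{6}$ is odd exactly when $m\equiv4,5\pmod 8$; since the class $4\pmod 8$ is self-paired under $m\mapsto-m$ while $5\mapsto3$, the crank and rank symmetries $M(-m,n)=M(m,n)$, $N(-m,n)=N(m,n)$ reduce $\mu_6(n)=\sum_m\binom{m+2}{6}M(m,n)$ and $\eta_6(n)=\sum_m\binom{m+2}{6}N(m,n)$ to
$$
\spt_3(n)\equiv\sum_{m\equiv3\,(8)}\bigl(M(m,n)+N(m,n)\bigr)\pmod 2,
$$
the sum over all $m\in\Z$. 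I would then write the generating function of the right-hand side by extracting the residue $m\equiv3\pmod 8$ from $C(z,q)$ and from the two-variable rank series $\sum_{m,n}N(m,n)z^mq^n$ using $8$th roots of unity; all of the specializations $C(\zeta^j,q)$ and $\sum_{m,n}N(m,n)\zeta^{jm}q^n$, with $\zeta=e^{2\pi i/8}$, are classical theta/eta quotients. The result is a $q$-series which, reduced modulo $2$, one must show has no term whose exponent is $\equiv1\pmod 4$. This last verification is the main obstacle: the mod $7$ statement, though lengthy, is essentially bookkeeping once the relations of \cite{At-Ga} are in hand, whereas the mod $2$ statement genuinely rests on identifying the above root-of-unity combination with an explicit modular form and reading off its support modulo $4$.
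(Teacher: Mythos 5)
Your mod $7$ argument is essentially the paper's, and it works. Carrying out the elimination of $N_4(n)$ and $N_6(n)$ that you describe produces exactly \eqn{spt3id}; its denominators are prime to $7$ and the coefficient of $M_6(n)$ is $-7/7920$, so $M_6$ simply disappears modulo $7$. In particular no ``mod $7$ reduction of $M_6(n)$ in terms of $M_2(n)$'' is needed, and none is available in the part of \cite{At-Ga} you are invoking, so your plan should not lean on one --- but since the term vanishes this is harmless. The remaining residue-class bookkeeping with $M_2(n)=2np(n)$, \eqn{M42mod7}, \eqn{N2mod7} and the vanishing of $M_2,M_4$ at $7n+5$ is the paper's computation, and your five classes $0,1,2,4,5$ cover what is claimed (for $n\equiv 2\pmod 7$ all three polynomial coefficients in \eqn{spt3mod7id} vanish, so no re-expression of $N_2$ is even required).

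The mod $2$ half has a genuine gap. Your Lucas reduction is correct: $\binom{m+2}{6}$ is odd exactly for $m\equiv 4,5\pmod 8$ (also for negative $m$), the class $4\pmod 8$ cancels in pairs under $m\mapsto-m$, and so \eqn{spt3mod2} reduces to showing $M(3,8,n)\equiv N(3,8,n)\pmod 2$ for $n\equiv1\pmod4$, in the notation $M(r,t,n)$, $N(r,t,n)$ of Section \sect{congsptk}. That is a clean reformulation, but it is the entire content of the congruence and you do not prove it. Moreover, the tool you propose for proving it does not exist in the form you assume: the crank specializations $C(\zeta_8^j,q)$ are indeed eta/theta quotients, but the rank generating function $\sum_{m,n}N(m,n)\zeta_8^{jm}q^n$ at a primitive $8$th root of unity is \emph{not} a classical theta/eta quotient --- it is a mock theta function (this is precisely the phenomenon underlying \cite{Br08} and \cite{Br-Ga-Ma}), so ``reading off the support mod $4$'' of the resulting series is not a finite modular-forms verification. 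Absent a Lewis-type rank--crank relation modulo $8$ as input, your argument is incomplete at exactly the step you flag as the main obstacle. The paper takes a different and self-contained route: since $N(m,n)=N(-m,n)$ forces $N_2(n)$ to be even, the single rank term in \eqn{spt3id} is already $\equiv0\pmod2$ for $n\equiv1\pmod4$, and the remaining crank part $s_3(n)$ is expressed, via the quasimodular-form spaces $\mathcal{W}_3$ of \cite{At-Ga}, as an explicit combination of $p(n)$, $p_3(n)$, $p_5(n)$, whence $s_3(4n+1)\equiv p(4n+1)+p_3(4n+1)\equiv0\pmod2$ using $\sigma(n)\equiv\sigma_3(n)\equiv\sigma_5(n)\pmod 2$. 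You would need either to adopt that argument or to supply a proof of $M(3,8,n)\equiv N(3,8,n)\pmod 2$ for $n\equiv 1\pmod 4$ from some other source.
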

\begin{proof}
From \cite[(5.6)-(5.7)]{At-Ga} and the definition of $\spt_3(n)$ we have
\begin{align}            
\spt_3(n) & = - \frac{7}{7920} M_6(n) 
            + \frac{1}{1584}(60n+13) M_4(n)
            + \frac{1}{3960}(7-78n-108n^2) M_2(n) \nonumber\\
          &\qquad  - \frac{1}{20} n(1+3n) N_2(n),
\mylabel{eq:spt3id}
\end{align}
and
\begin{equation}
\spt_3(n) \equiv n(5n+4) M_2(n) + (3 + 2n) M_4(n) + n(3n+1) N_2(n)\pmod{7}.
\mylabel{eq:spt3mod7id}
\end{equation}
This implies that
$$
\spt_3(7n+2)\equiv0\pmod{7}.
$$
Known results for the rank and crank \cite[(1.18),(1.21)]{At-Ga} imply that 
$$
\spt_3(7n+5)\equiv0\pmod{7}.
$$
The congruences \eqn{N2mod7}, \eqn{M420mod7}, \eqn{M421mod7} and 
\eqn{spt3mod7id} imply that
$$
\spt_3(7n)\equiv\spt_3(7n+1)\equiv0\pmod{7}.
$$
The congruences \eqn{M42mod7} and \eqn{spt3mod7id} imply that
$$
\spt_3(7n+4)\equiv2M_2(7n+4) + 3N_2(7n+4)\pmod{7}.
$$
From \eqn{N2mod7} and the fact that $M_2(n) = 2np(n)$ we have
$$
M_2(7n+4)\equiv p(7n+4), \quad N_2(7n+4) \equiv 4 p(7n+4) \pmod{7}
$$
and
$$
\spt_3(7n+4)\equiv0 \pmod{7}.
$$

   We now turn to the congruence \eqn{spt3mod2}. First we note that the 
term 
$$
 \frac{1}{20} n(1+3n) N_2(n) \equiv 0 \pmod{2},
$$
when $n\equiv 1 \pmod{4}$ since $N_2(n)\equiv0\pmod{2}$.
We define
$$
s_3(n) =  - \frac{7}{7920} M_6(n) 
            + \frac{1}{1584}(60n+13) M_4(n)
            + \frac{1}{3960}(7-78n-108n^2) M_2(n) 
$$
so that
$$
\spt_3(4n+1) \equiv s_3(4n+1) \pmod{2}.
$$
By \cite[Theorem 4.2]{At-Ga}, the function
$$
S_3(q) := \sum_{n=1}^\infty s_3(n) q^n \in P \mathcal{W}_3,
$$
where $\mathcal{W}_n$ is a space  of quasimodular forms
of weight bounded by $2n$ defined in \cite[(3.7)]{At-Ga}, and
\beq
P = P(q) = \frac{1}{(q)_\infty}.
\mylabel{eq:Pdef}
\eeq
We define the functions
$$
P_3(q) = \sum_{n=1}^\infty p_3(n) q^n := P(q) \sum_{n=1}^\infty \sigma_3(n) q^n,
$$
and
$$
P_5(q) = \sum_{n=1}^\infty p_5(n) q^n := P(q) \sum_{n=1}^\infty \sigma_5(n) q^n.
$$
Let $\delta_q = q\frac{d}{dq}$. 
By \cite[(3.29) and Lemma 4.1]{At-Ga} the functions
$\delta_q(P)$, $\delta_q^2(P)$, $\delta_q^3(P)$,
$P_3$, $\delta_q(P_3)$, and $P_5 \in P \mathcal{W}_3$. Since
$\dim \mathcal{W}_3 = 6$ by \cite[Cor.3.6]{At-Ga}, there is a linear
relation between these functions and $S_3(q)$. A calculation gives
that
$$
s_3(n) = \frac{n}{270}(5 - 12n - 147n^2) p(n) + \frac{1}{12}(6n+1) p_3(n)
 - \frac{7}{540} p_5(n)
$$
and
$$
4 s_3(n) \equiv 6n (1+n^2) p(n) + (3+2n) p_3(n) + 7 p_5(n) \pmod{8}.
$$
Since $d^3\equiv d^5\pmod{8}$ it follows that
$$
\sigma_3(n)\equiv\sigma_5(n) \pmod{8}\quad\mbox{and}\quad
p_3(n)\equiv p_5(n) \pmod{8}.                          
$$
Hence
$$
4 s_3(n) \equiv 6n (1+n^2) p(n) + (10+2n) p_3(n)\pmod{8},
$$ 
and
$$
s_3(4n + 1) \equiv p(4n+1) + p_3(4n+1)\pmod{2}.
$$ 
It is well known that
$$
\delta_q(P) = \sum_{n=1}^\infty n p(n) q^n = P(q) \sum_{n=1}^\infty
\sigma(n) q^n.
$$
Since $\sigma(n)\equiv\sigma_3(n)\pmod{2}$ it follows that
$$
n p(n) \equiv p_3(n) \pmod{2},
$$
$$
p(4n+1) \equiv p_3(4n+1) \pmod{2},
$$
and
$$
s_3(4n+1)\equiv 0 \pmod{2},
$$
which completes the proof of \eqn{spt3mod2}.
\end{proof}

\begin{theorem}
\label{thm:spt4mod3}
\begin{equation}
\spt_4(3n) \equiv 0 \pmod{3}.
\mylabel{eq:spt4mod3}
\end{equation}
\end{theorem}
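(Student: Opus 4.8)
The plan is to imitate the proof of \eqn{spt3mod2}. By \eqn{symMid} and \eqn{symNid} with $2k=8$ we have $\sum_{n\ge1}\spt_4(n)q^n=\sum_{n\ge1}\mu_8(n)q^n-\sum_{n\ge1}\eta_8(n)q^n$, and by the results of \cite{At-Ga} (applied to both the crank and the rank moment generating functions, exactly as in the proof of \eqn{spt3mod2}) this series lies in $P\mathcal{W}_4$, where $P=1/(q)_\infty$. Since $\dim\mathcal{W}_4=10$ by \cite[Cor.3.6]{At-Ga}, and since $\delta_q^jP$ for $1\le j\le4$, together with $P_3,\delta_qP_3,\delta_q^2P_3,P_5,\delta_qP_5$ and $P_7:=P\sum_{n\ge1}\sigma_7(n)q^n$, all lie in and span $P\mathcal{W}_4$, a finite computation (matching, say, the first dozen coefficients) yields an explicit identity
$$
\spt_4(n)=A(n)\,p(n)+B(n)\,p_3(n)+C(n)\,p_5(n)+D\,p_7(n),
$$
with $A,B,C\in\Q[n]$ of degrees at most $4,2,1$ respectively and $D\in\Q$. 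Here $A(0)=0$: the bracketed $\mathcal{W}_4$-factor of $\sum_{n\ge1}\spt_4(n)q^n$ vanishes at $q=0$ (in fact to order $4$, since the $n=1$ contributions of the two sums in \eqn{symMid}--\eqn{symNid} cancel through order $q^3$), so $\sum_{n\ge1}\spt_4(n)q^n$ has no $P$-component.

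The passage to the congruence then uses only the elementary fact that $d^{\,2k-1}\equiv d\pmod3$ for every integer $d$ and every $k\ge1$, whence $\sigma_{2k-1}(n)\equiv\sigma(n)\pmod3$ for all $n$. Combining this with Euler's identity $\delta_q(P)=P\sum_{n\ge1}\sigma(n)q^n$ gives $p_{2k-1}(n)\equiv n\,p(n)\pmod3$, and in particular
$$
p_3(3n)\equiv p_5(3n)\equiv p_7(3n)\equiv 3n\,p(3n)\equiv 0\pmod3.
$$
Provided the coefficients of $A,B,C,D$ are $3$-integral, reducing the displayed identity at $n\mapsto3n$ modulo $3$ annihilates the last three terms, while $A(3n)\equiv A(0)=0\pmod3$ annihilates the first; hence $\spt_4(3n)\equiv0\pmod3$.

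The heart of the argument --- and the main obstacle --- is therefore the explicit expansion together with the verification that no factor of $3$ survives in the denominators of $A,B,C,D$. This is a genuine point: the analogous expansion for $\spt_3$ in the proof of \eqn{spt3mod2} carries $3^3$ in its denominator, which is precisely why $\spt_3$ has no mod $3$ congruence along $3\Z$. If an unavoidable factor $3^a$ did appear one would instead clear it, work modulo $3^{a+1}$, and exhibit the compensating cancellations, in the style of the passage to $4s_3(n)\pmod8$ in the proof of \eqn{spt3mod2}; controlling this $3$-adic bookkeeping would then be the delicate step. An alternative, purely $q$-series route is to combine \eqn{symMid}, \eqn{symNid} and \eqn{sptkid} and use $(1-q^n)^8\equiv(1-q^{3n})^2(1-q^n)^2$ and $\tfrac{1+q^n}{(1-q^n)^2}\equiv\tfrac{1+2q^{2n}}{1-q^{3n}}\pmod3$ to reduce $\sum_{n\ge1}\spt_4(n)q^n$ modulo $3$ to $\frac1{(q)_\infty}\sum_{n\ge1}(-1)^{n-1}\frac{(1+2q^{2n})(1-q^{n^2})}{1-q^{9n}}\,q^{n(n+7)/2}$, and then to extract the coefficients of $q^{3n}$ via a $3$-dissection, using $1/(q)_\infty\equiv(q)_\infty^2/(q^3;q^3)_\infty\pmod3$; the bookkeeping there is heavier but avoids the quasimodular machinery.
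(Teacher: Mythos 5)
There is a genuine gap, and in fact two. First, your starting claim that $\sum_{n\ge1}\spt_4(n)q^n\in P\mathcal{W}_4$ is false: by \cite[Theorem 4.2]{At-Ga} only the \emph{crank} moment generating functions lie in $P$ times a space of quasimodular forms; the rank moment generating functions do not (this is precisely why Bringmann needed quasi-weak Maass forms). In the proof of \eqn{spt3mod2} the paper does not apply quasimodularity to the rank part --- it first isolates the single $N_2(n)$ term and kills it mod $2$ on the relevant progression, and only then invokes $P\mathcal{W}_3$ for the remaining crank-only function $s_3$. You would have to do the same here (it does work: the $N_2$ coefficient in \eqn{spt4id} is $\tfrac{1}{140}n(1+n)(1+3n)$, which is $\equiv0\pmod 3$ when $3\mid n$), but as written your expansion of $\spt_4(n)$ itself in terms of $p,p_3,p_5,p_7$ does not exist. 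Second, and fatally, the $3$-integrality proviso that you yourself call ``the heart of the argument'' fails. The paper's own computation of the $\mathcal{W}_6$-expansion of the crank part shows that the coefficients must be multiplied by $3^5$ to become $3$-integral, and the denominators already visible in \eqn{spt4id} (e.g.\ $2629440$, $3067680$) carry several factors of $3$. Your fallback --- clear $3^a$ and work mod $3^{a+1}$ --- then requires congruences among the $\sigma_{2k+1}(n)$ modulo $3^{6}$, which do not hold (unlike $d^3\equiv d^5\pmod 8$, one only has $d^3\equiv d^5\pmod 9$, not modulo higher powers), and the discrepancy is exactly a cusp-form contribution. This is why the paper states that the quasimodular route ``leads to a congruence for the Ramanujan tau-function'': run in your direction it would \emph{require} the mod $3^6$ congruence \eqn{taucong} for $\tau(n)$ as an input, whereas the paper derives that congruence as a \emph{corollary} of the theorem.

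The paper's actual proof is entirely different and does not use quasimodular forms. One reduces the generating function \eqn{symMid} minus \eqn{symNid} (with $k=4$) modulo $3$ via $(1-q^n)^9\equiv 1-q^{9n}\pmod 3$, recognizes the two resulting bilateral sums as the Atkin--Swinnerton-Dyer/Ekin generating functions $S_1(5,9)$ and $S_3(5,9)$ for the crank and rank modulo $9$, concludes $\spt_4(n)\equiv M(4,9,n)-N(4,9,n)\pmod 3$, and then quotes Lewis's identity $M(4,9,3n)=N(4,9,3n)$. Your ``alternative $q$-series route'' starts down this road (the reduction $\tfrac{1}{(1-q^n)^8}\equiv\tfrac{1-q^n}{1-q^{9n}}\pmod 3$ is essentially your manipulation), but the subsequent $3$-dissection you defer to ``bookkeeping'' is not elementary: extracting the coefficients of $q^{3n}$ from $\tfrac{1}{(q)_\infty}S_3(5,9)$ is the content of deep rank-mod-$9$ identities, and without Lewis's theorem (or an equivalent) the argument cannot be closed.
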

\begin{proof}
From \eqn{symNid} and \eqn{symMid} we have
\begin{align*}
\sum_{n=1}^\infty \spt_4(n) q^n
& = \frac{1}{(q)_\infty}
    \left(
   \sum_{\substack{n=-\infty\\n\ne0}}^\infty
                                      \frac{(-1)^{n-1}q^{n(n+1)/2 + 4n}}
                                           {(1-q^n)^{8}}
   -\sum_{\substack{n=-\infty\\n\ne0}}^\infty
                                      \frac{(-1)^{n-1}q^{n(3n+1)/2 + 4n}}
                                           {(1-q^n)^{8}}
    \right)\\
&\equiv
 \frac{1}{(q)_\infty}
    \left(
   \sum_{\substack{n=-\infty\\n\ne0}}^\infty
                \frac{(-1)^{n-1}q^{n(n+1)/2 + 4n}(1-q^n)}
                                           {(1-q^{9n})}  
    \right.
   \\
& \qquad\qquad
  \left.
   -\sum_{\substack{n=-\infty\\n\ne0}}^\infty
                 \frac{(-1)^{n-1}q^{n(3n+1)/2 + 4n}(1-q^n)}
                                         {(1-q^{9n})}
    \right)
   \pmod{3}.
\end{align*}
Before we can proceed we need some results for the rank and
crank mod $9$. We define
$$
S_k(b) = S_k(b,t) := \
   \sum_{\substack{n=-\infty\\n\ne0}}^\infty
                 \frac{(-1)^{n}q^{n(kn+1)/2 + bn}}
                                         {(1-q^{tn})},
$$
so that
\begin{align*}
\sum_{n=1}^\infty \spt_4(n) q^n
&\equiv \frac{1}{(q)_\infty}\left(-S_1(4,9) + S_1(5,9) + S_3(4,9) - S_3(5,9)
\right)
\pmod{9}.
\end{align*}
Now let $M(r,t,n)$ denote the number of partitions of $n$ with crank
congruent to $r$ mod $t$ and
let $N(r,t,n)$ denote the number of partitions of $n$ with rank
congruent to $r$ mod $t$. Then by \cite[(2.13)]{At-Sw} and 
\cite[(2.5)]{Ek00}
we have
$$
\sum_{n=0}^\infty N(r,t,n)q^n = \frac{1}{(q)_\infty}\left(S_3(r,t)+S_3(t-r,r)
\right)
$$
and
$$
\sum_{n=0}^\infty M(r,t,n)q^n = \frac{1}{(q)_\infty}\left(S_1(r,t)+S_1(t-r,r)
\right).
$$
From \cite[(2.3)]{Ek00} and \cite[(6.2)]{At-Sw} 
$$
S_k(b,t) = - S_k(t-1-b),
$$
for $k=1,3$.
Hence
$$
\sum_{n=0}^\infty M(4,9,n)q^n = \frac{1}{(q)_\infty}(S_1(4,9) + S_1(5,9))
=
\frac{1}{(q)_\infty} S_1(5,9)
$$
and
$$
\sum_{n=0}^\infty N(4,9,n)q^n = \frac{1}{(q)_\infty}(S_3(4,9) + S_3(5,9))
=
\frac{1}{(q)_\infty} S_3(5,9)
$$
since
$$
S_1(4,9) = S_3(4,9) = 0.
$$
It follows that
$$
\spt_4(n) \equiv M(4,9,n) - N(4,9,n) \pmod{3}. 
$$
Lewis \cite[(1a)]{Le92} has shown that
$$
M(4,9,3n) = N(4,9,3n)
$$
and our congruence \eqn{spt4mod3} follows.
\end{proof}

If we try the approach of using quasimodular forms to the prove
the congruence \eqn{spt4mod3} we are led to a congruence
for the Ramanujan tau-function.
\begin{cor}
\begin{align}
\qquad\qquad \tau(n) &\equiv
 \left( 588+297\,n+258\,{n}^{2} +9\,{n}^{3} +108\,{n}^{4}+486\,{n}^{5}\right) 
 \sigma_{{1}} \left( n \right) \nonumber\\
& + \left( 60+255\,n+189\,{n}^{2}+612\,{n}^{3}+162\,{n}^{4} \right) 
    \sigma_{{3}} \left( n\right) 
\mylabel{eq:taucong}\\
& + \left(306+297\,n+ 540\,{n}^{2}+180\,{n}^{3} \right) 
     \sigma_{{5}} \left( n \right) 
 + \left( 177+576\,n+454\,{n}^{2}\right) \sigma_{{7}} \left( n \right)\nonumber\\
&   + \left(201+ 690\,n\right) \sigma_{{9}} \left( n \right) 
+117\,\sigma_{{11}}
 \left( n \right) 
\pmod{3^6}.
\nonumber
\end{align}
\end{cor}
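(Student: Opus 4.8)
The plan is to run, in reverse, the quasimodular-forms proof of the congruence \eqn{spt4mod3}, and to read off \eqn{taucong} as the consistency condition that this approach produces. First I would place the generating function of $\spt_4$ in the Atkin--Garvan framework. Exactly as $S_3(q)=\sum s_3(n)q^n\in P\mathcal{W}_3$ was used in the proof of \eqn{spt3mod2}, the crank-moment part of $\sum_n\spt_4(n)q^n$ lies in $P\mathcal{W}_4$, where $P=1/(q)_\infty$ and $\mathcal{W}_4$ is the space of quasimodular forms of weight bounded by $8$. I would therefore write
\[
\sum_{n=1}^\infty \spt_4(n)\,q^n \;=\; P\cdot F,\qquad F\in\mathcal{W}_4,
\]
and expand $F$ in the explicit basis furnished by $\delta_q^j(P)/P$, $\delta_q^j(P_3)/P$, $\delta_q^j(P_5)/P$ and $P_7/P$ (with $P_7:=P\sum_n\sigma_7(n)q^n$ the natural analogue of $P_3,P_5$). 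This realizes $\spt_4(n)$ as a combination of $p(n)$ and the convolutions $p_{2j-1}(n)=[\,P\sum_m\sigma_{2j-1}(m)q^m\,]_n$ with polynomial-in-$n$ coefficients, just as $s_3(n)$ was written through $p(n),p_3(n),p_5(n)$; fixing a basis of $\mathcal{W}_4$ makes this step routine linear algebra.

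Next I would identify the right-hand side of \eqn{taucong}. Writing $G(q)=\sum_n R(n)q^n$ for that combination of divisor sums, a weight count shows $G$ is a quasimodular form of weight $12$: the coefficient of $\sigma_{2j-1}(n)$ is a polynomial in $n$ of degree $6-j$, so applying $\delta_q^{\,6-j}$ to the weight-$2j$ Eisenstein series contributes weight $2j+2(6-j)=12$ in every line of \eqn{taucong}. Since $\Delta=q(q)_\infty^{24}=\sum_n\tau(n)q^n$ is the weight-$12$ cusp form, the corollary is precisely the assertion
\[
\Delta\equiv G \pmod{3^6}
\]
at the level of Fourier coefficients, and I would recast the whole statement in this form.

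The heart of the argument is then the $3$-dissection. Using the tools already invoked in the proof of Theorem \thm{spt4mod3} --- in particular the Frobenius-type congruence $(q)_\infty^{3^k}\equiv(q^{3^k})_\infty$ to the relevant power of $3$, together with the identity $(q)_\infty^{24}=\Delta/q$ --- I would extract the coefficients of $P\cdot F$ on the progression $3\mid n$. The congruence $\spt_4(3n)\equiv0\pmod{3}$, already established in Theorem \thm{spt4mod3} via Lewis's identity $M(4,9,3n)=N(4,9,3n)$, forces the $3\Z$-component of $P\cdot F$ to vanish modulo $3$. Clearing the factor $P=1/(q)_\infty$ and tracking the $24$th power that converts $(q)_\infty$ into $\Delta$ turns this vanishing into a congruence for $\tau(n)$, while the denominators appearing in the expansion of $F$ (which already carry $3^3$ at weight $6$, as in $s_3$) combine with the Frobenius cube to supply the modulus $3^6$.

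I expect the main obstacle to be precisely this $3$-adic bookkeeping: pinning down that the power of $3$ produced by the dissection is exactly $3^6$ --- neither weaker nor stronger --- and justifying the passage from the formal $q$-series manipulation to a genuine congruence between the weight-$12$ quasimodular forms $\Delta$ and $G$. Once that is in place, the identity can be confirmed by a finite check: because the space of weight-$12$ quasimodular forms on $\SL_2(\Z)$ is finite-dimensional, the congruence $\Delta\equiv G\pmod{3^6}$ reduces to the agreement of finitely many Fourier coefficients modulo $3^6$, which is a routine computation.
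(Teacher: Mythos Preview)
Your plan has a real gap at the point where $\tau(n)$ is supposed to enter. You correctly place the crank-moment part $s_4$ of $\spt_4$ in $P\mathcal{W}_4$, but $\mathcal{W}_4$ consists of quasimodular forms of weight at most $8$; its basis (the $\delta_q^{\,j}\Phi_{2k+1}$ with $2(k+1)+2j\le 8$) contains no cusp form, and in particular $\Delta$ does not occur there. Your proposed route to $\Delta$ --- ``tracking the $24$th power that converts $(q)_\infty$ into $\Delta$'' --- does not materialize: only a single factor $P=1/(q)_\infty$ is present, and neither a $3$-dissection nor the Frobenius congruence $(q)_\infty^{3}\equiv(q^3)_\infty\pmod 3$ manufactures $(q)_\infty^{24}$ out of it. (Also, the proof of Theorem~\thm{spt4mod3} does not use any Frobenius congruence; it goes through rank and crank mod~$9$ and Lewis's identity $M(4,9,3n)=N(4,9,3n)$.) So as written, your argument never reaches a space in which $\tau(n)$ appears, and the ``$3$-adic bookkeeping'' you flag as a difficulty is in fact a symptom of this missing structural step.

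The device the paper uses, and which your plan lacks, is to raise the weight \emph{before} doing the linear algebra. Since $n^2\equiv 1\pmod 3$ for $3\nmid n$, while $s_4(3n)\equiv 0\pmod 3$ by Theorem~\thm{spt4mod3}, the series
\[
S_4^{*}(q):=(\delta_q^2-1)\,S_4(q)=\sum_{n\ge1}(n^2-1)\,s_4(n)\,q^n
\]
has \emph{every} coefficient divisible by $3$; and now $S_4^{*}\in P\mathcal{W}_6$. The space $\mathcal{W}_6$ (weight $\le 12$) has dimension $22$, spanned by the $21$ functions $\delta_q^{\,j}\Phi_{2k+1}$ ($0\le k\le 5$, $0\le j\le 5-k$) together with $\Delta$. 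Expanding $S_4^{*}/P$ in this basis is a finite linear-algebra computation; one finds that multiplying through by $3^5$ makes all coefficients $3$-integral, so the congruence $S_4^{*}\equiv 0\pmod 3$ becomes a relation mod~$3^6$ among $\tau(n)$ and the functions $n^{j}\sigma_{2k+1}(n)$, which solved for $\tau(n)$ is exactly \eqn{taucong}. The operator $\delta_q^2-1$ does double duty here: it converts the residue-class condition ``$s_4(3n)\equiv 0$'' into a uniform series congruence, and it simultaneously lifts you into the weight-$12$ space where $\Delta$ is a basis vector. Without that step your $3$-dissection leaves you outside any finite-dimensional space containing $\Delta$, and the argument cannot close.
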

\begin{proof}
From \cite[(5.6)-(5.8)]{At-Ga} and the definition of $\spt_3(n)$ we see that
\begin{align}            
\spt_4(n) &=
-{\frac {67}{7362432}}M_8(n)
+{\frac{1}{2629440}}(491+1176n)M_6(n) \nonumber\\
&-{\frac{1}{1051776}}(1309+8400n+5856{n}^{2})M_4(n)\nonumber\\
&+{\frac{1}{3067680}}(-851+10966n+21204{n}^{2}+12162{n}^{3})M_2(n)\nonumber\\
&+{\frac{1}{140}}(n+4{n}^{2}+3{n}^{3})N_2(n).
\mylabel{eq:spt4id}
\end{align}
We define
\begin{align*}            
s_4(n) &=
-{\frac {67}{7362432}}M_8(n)
+{\frac{1}{2629440}}(491+1176n)M_6(n) \nonumber\\
&-{\frac{1}{1051776}}(1309+8400n+5856{n}^{2})M_4(n)\nonumber\\
&+{\frac{1}{3067680}}(-851+10966n+21204{n}^{2}+12162{n}^{3})M_2(n)\nonumber
\end{align*}
so that
$$
\spt_4(3n)\equiv s_4(3n) \pmod{3}.
$$
By \cite[Theorem 4.2]{At-Ga}, the function
$$
S_4(q) := \sum_{n=1}^\infty s_4(n) q^n \in P \mathcal{W}_4,
$$
$$
S^{*}_4(q) := \left(\delta_q^2-1\right)S_4(q) = 
\sum_{n=1}^\infty (n^2-1) s_4(n) q^n \in P \mathcal{W}_6,
$$
and
\begin{equation}
S^{*}_4(q) \equiv 0\pmod{3},
\mylabel{eq:S4starmod3}
\end{equation}
by Theorem \thm{spt4mod3}.
By \cite[(3.29)]{At-Ga} the functions
$\delta_q^j(\Phi_{2k+1})$ ($0 \le j \le 5-k$, $0 \le k \le 5$), 
and  $\Delta \in \mathcal{W}_6$, 
where 
$$
\Phi_j=\Phi_j(q) = 
\sum_{n=1}^\infty \frac{n^j q^n}{1-q^n} = \sum_{m,n\ge1} n^j q^{nm}
= \sum_{n=1}^\infty \sigma_j(n) q^n,
$$
and
$$
\Delta = \Delta(q) = \sum_{n=1}^\infty \tau(n) q^n = q\prod_{n=1}^\infty (1-q^n)^{24}.
$$
Since
$\dim \mathcal{W}_6 = 22$ by \cite[Cor.3.6]{At-Ga}, there is a linear
relation between these functions and $S^{*}_4(q)/P$. 
In fact, we can write the function $S^{*}_4(q)/P$ as a linear combination of the
$22$ functions $\delta_q^j(\Phi_{2k+1})$ ($0 \le j \le 5-k$, $0 \le k \le 5$),
and  $\Delta \in \mathcal{W}_6$. The coefficients in this linear combination are
rational numbers, and we find that we need to multiply each coefficient by $3^5$
to obtain $3$-integral rationals. The congruence \eqn{S4starmod3} then implies
a congruence mod $3^6$ between the arithmetic functions 
$n^j(\sigma_{2k+1}(n))$ ($0 \le j \le 5-k$, $0 \le k \le 5$),
and  $\tau(n)$.   Solving this congruence for $\tau(n)$ gives the result
\eqn{taucong}.                
\end{proof}

Ashworth \cite{As-thesis} (see also \cite{Ko76}) has also obtained
congruences for $\tau(n)$ mod powers of $3$. Ashworth's congruences
have a different form and depend on the residue of $n$ mod $3$.


\section{Concluding remarks}\label{sec:remarks}

It should be pointed out that Bringmann, Mahlburg and Rhoades \cite{Br-Ma-Rh}
have proved that there are positive constants 
$\alpha_{k}$ and $\beta_{k}$ such 
that
\begin{align}
M_{2k}(n) \sim N_{2k}(n) &\sim \alpha_{k} n^k \, p(n) 
\mylabel{eq:asymp1}\\
M_{2k}(n) - N_{2k}(n) &\sim \beta_{k} n^{k-\frac{1}{2}} \, p(n),  
\mylabel{eq:asymp2}
\end{align}
as $n\to\infty$ when $k$ is fixed.
This implies that

\eject

\begin{equation}
\spt_k(n) \sim \frac{\beta_{k}}{(2k)!}  n^{k-\frac{1}{2}} \, p(n),
\mylabel{eq:sptkapprox}
\end{equation}
as $n\to\infty$ when $k$ is fixed. It would interesting to consider
whether the new identity \eqn{mainid} could lead to
an elementary upper bound for $\spt_k(n)$.

Folsom and Ono \cite{Fo-On} found nontrivial congruences
for Andrews spt-function mod $2$ and $3$. Ono \cite{On10}
also found simple explicit congruences for Andrews' spt-function
modulo every prime $>3$. These congruences are related to
the action of a  weight $\tfrac{3}{2}$ Hecke operator. It would
be interesting to determine whether such behavior continues
for the higher degree spt-functions and higher weight
Hecke operators.

The function
\beq
A_k(q) =
\sum_{n_k \ge n_{k-1} \ge \cdots \ge n_1 \ge 1}
\frac{q^{n_1 + n_2 + \cdots + n_k}}
{(1-q^{n_k})^2 (1-q^{n_{k-1}})^2 \cdots (1-q^{n_1})^2}
\mylabel{eq:Akdef}           
\eeq
occurs in equation \eqn{Mukidintro} so that
\beq
\sum_{n=1}^\infty \mu_{2k}(n) q^n =
\frac{1}{(q)_\infty} \, A_k(q).
\mylabel{eq:MukAkid}        
\eeq
The function $A_k(q)$ was first studied by MacMahon \cite{MacM19} as
a generalization of
\beq
A_1(q) = \sum_{n=1}^\infty \sigma_1(n) q^n 
= \sum_{n=1} \frac{q^n}{(1-q^n)^2}.
\mylabel{eq:A1}
\eeq
He conjectured that the coefficients of $A_k(q)$ could be expressed
in terms of divisors functions. This conjecture was recently
proved by Andrews and Rose \cite{An-Ro} by showing that in general
$A_k(q)$ is a quasimodular form. The result also follows from
\eqn{MukAkid}, \eqn{symM2M} and the fact that the generating function
for $M_{2k}(n)$ is $P(q)$ times a quasimodular form, which was 
proved Atkin and the author \cite[Theorem 4.2]{At-Ga}. 
Andrews and Rose's proof is more direct. Andrews and Rose's 
were motivated by a certain curve-counting problem on Abelian
surfaces.


\section{Table}\label{sec:sptktab}

For reference we include a table of $\spt_k(n)$ for $1\le k \le 6$, $1\le n\le29$.
$$
\begin {array}{rrrrrrr} n{\backslash}k&1&2&3&4&5&6\\ \noalign{\smallskip}1&1&0
&0&0&0&0\\ \noalign{\smallskip}2&3&1&0&0&0&0\\ \noalign{\smallskip}3&5&5&1
&0&0&0\\ \noalign{\smallskip}4&10&15&7&1&0&0\\ \noalign{\smallskip}5&14&35
&28&9&1&0\\ \noalign{\smallskip}6&26&75&85&45&11&1\\ \noalign{\smallskip}7
&35&140&217&166&66&13\\ \noalign{\smallskip}8&57&259&497&505&287&91
\\ \noalign{\smallskip}9&80&435&1036&1341&1013&456\\ \noalign{\smallskip}
10&119&735&2044&3223&3081&1834\\ \noalign{\smallskip}11&161&1155&3787&
7149&8372&6293\\ \noalign{\smallskip}12&238&1841&6797&14916&20824&19125
\\ \noalign{\smallskip}13&315&2765&11648&29480&48192&52781
\\ \noalign{\smallskip}14&440&4200&19558&55902&105117&134643
\\ \noalign{\smallskip}15&589&6125&31703&101892&217945&321622
\\ \noalign{\smallskip}16&801&8975&50645&180245&433017&726650
\\ \noalign{\smallskip}17&1048&12731&78674&309297&828346&1564696
\\ \noalign{\smallskip}18&1407&18179&120932&518859&1534271&3231635
\\ \noalign{\smallskip}19&1820&25235&181664&849563&2759132&6432859
\\ \noalign{\smallskip}20&2399&35180&270600&1366441&4837638&12395504
\\ \noalign{\smallskip}21&3087&48055&395682&2154789&8283014&23195905
\\ \noalign{\smallskip}22&3998&65681&574329&3348972&13894554&42287433
\\ \noalign{\smallskip}23&5092&88299&820834&5119981&22856717&75274166
\\ \noalign{\smallskip}24&6545&118895&1166109&7733835&36968045&131143033
\\ \noalign{\smallskip}25&8263&157690&1634668&11520100&58818578&
223982780\\ \noalign{\smallskip}26&10486&209230&2279242&16985374&
92258215&375713010\\ \noalign{\smallskip}27&13165&274510&3142903&
24746334&142699970&619712403\\ \noalign{\smallskip}28&16562&359779&
4312063&35735413&218041302&1006599177\\ \noalign{\smallskip}29&20630&
466970&5859616&51073008&329162610&1611563058\end {array} 
$$


\noindent
\textbf{Acknowledgements}

\noindent
Firstly, I would like to thank Richard McIntosh for
showing me Alexander Patkowksi's paper \cite{Pa10}.
It was Patkowski's idea of using a limiting form
of Bailey's Lemma to derive spt-like results that first got me
started on the way to generalizing Andrews' spt-function.
Secondly, I would like to thank Mike Hirschhorn for hosting
my stay at UNSW in June 2010, 
and suggesting to me that I take a look at Stirling numbers of the second
kind. This was quite helpful in relating ordinary and symmetrized
moments. Finally, I would like to thank George Andrews,
Mikl\'os B\'ona,
Kathrin Bringmann, Jeremy Lovejoy and Karl Mahlburg
for their comments
and suggestions.
\bibliographystyle{amsplain}

\begin{thebibliography}{10}
\bibitem{An-CBMS-book}
G.~E.~Andrews,
\textit{{$q$}-Series: Their Development and Application in Analysis,
Number Theory, Combinatorics, Physics, and Computer Algebra},
CBMS Regional Conference Series in Mathematics,
66,
AMS,
Providence, R.I.,
1986.
\bibitem{An07a}
G.~E.~Andrews,
\textit{Partitions, Durfee symbols, and the {A}tkin-{G}arvan moments of ranks},
Invent. Math.
\textbf{169} 
(2007),
37--73.
\hfill\\
URL:\,\url{http://dx.doi.org/10.1007/s00222-007-0043-4}
\bibitem{An08b}
G.~E.~Andrews, 
\textit{The number of smallest parts in the partitions of {$n$}},
J. Reine Angew. Math.
\textbf{624} 
(2008),
133--142.
\hfill\\
URL:\,\url{http://dx.doi.org/10.1515/CRELLE.2008.083}
\bibitem{An-Ro}
G.~E.~Andrews and S.~C.~F.~Rose,
\textit{MacMahon's sum-of-divisors functions, Chebyshev polynomials,
and quasi-modular forms}, preprint.
\hfill\\
URL:\,\url{http://front.math.ucdavis.edu/1010.5769}
\bibitem{As-thesis}
M.~H.~Ashworth,
\textit{Congruence properties of coefficients of modular forms using sigma 
functions},
Ph.D. thesis, Oxford University, 1966.                          
\bibitem{At-Ga}
A.~O.~L.~Atkin and F.~G.~Garvan, 
\textit{Relations between the ranks and cranks of partitions},
Ramanujan J.
\textbf{7} 
(2003),
343--366.
\hfill\\
URL:\,\url{http://dx.doi.org/10.1023/A:1026219901284}
\bibitem{At-Sw}
A.~O.~L. Atkin and  P.~Swinnerton-Dyer,
\textit{Some properties of partitions},
Proc. London Math. Soc. (3)
\textbf{4} 
(1954),
84--106.
\hfill\\
URL:\,\url{http://dx.doi.org/10.1112/plms/s3-4.1.84}
\bibitem{Br-Ga-Ma}
K.~Bringmann, F.~Garvan and K.~Mahlburg,
\textit{Partition statistics and quasiharmonic {M}aass forms},
Int. Math. Res. Not. IMRN,
Vol.  2009, No. 1, 
63--97.
\hfill\\
URL:\,\url{http://dx.doi.org/10.1093/imrn/rnn124}
\bibitem{Br-Lo-Os}
K.~Bringmann, J.~Lovejoy and R.~Osburn,
\textit{Automorphic properties of generating functions for generalized 
rank moments and {D}urfee symbols},
Int. Math. Res. Not. IMRN
\textbf{YR 2010} 
238--260.
\hfill\\
URL:\,\url{http://imrn.oxfordjournals.org/content/2010/2/238.full.pdf#page=1&view=FitH}
\bibitem{Br-Ma-Rh}
K.~Bringmann, K.~Mahlburg and R.~Rhoades,
\textit{Asymptotics for rank and crank moments},
Bull. London Math. Soc., to appear.
\hfill\\
URL:\,\url{http://front.math.ucdavis.edu/0903.1297}
\bibitem{Br-Ma}
K.~Bringmann and K.~Mahlburg,                         
\textit{Inequalities between ranks and cranks},
Proc. Amer. Math. Soc.
\textbf{137} 
(2009),
2567--2574.
\hfill\\
URL:\,\url{http://dx.doi.org/10.1090/S0002-9939-09-09806-2}
\bibitem{Br08}
K.~Bringmann, 
\textit{On the explicit construction of higher deformations of
partition statistics},
Duke Math. J.
\textbf{144} 
(2008),
195--233.
\hfill\\
URL:\,\url{http://dx.doi.org/10.1215/00127094-2008-035}
\bibitem{Ek00}
A.~B.~Ekin, 
\textit{Some properties of partitions in terms of crank},
Trans. Amer. Math. Soc.
\textbf{352} 
(2000),
2145--2156.
\hfill\\
URL:\,\url{http://dx.doi.org/10.1090/S0002-9947-00-02306-0}
\bibitem{Fo-On}                  
A.~Folsom and K.~Ono,
\textit{The spt-function of Andrews},
Proc. Natl. Acad. Sci. USA 
\textbf{105}
(2008), 
20152--20156.
\hfill\\
URL:\,\url{http://mathcs.emory.edu/~ono/publications-cv/pdfs/111.pdf}
\bibitem{Ga-Ra-book}
G.~Gasper and M.~Rahman, 
\textit{Basic Hypergeometric Series},
Encycl. Math. Appl., 
Cambridge Univ. Press, Cambridge, 2004.

\bibitem{Ga88a}
F.~G.~Garvan,
\textit{New combinatorial interpretations of Ramanujan's partition congruences
mod $5$, $7$ and {$11$}},
Trans. Amer. Math. Soc.
\textbf{305}
(1988),
47--77.
\hfill\\
URL:\,\url{http://dx.doi.org/10.2307/2001040}
\bibitem{Ga10a}
F.~G.~Garvan,
\textit{Congruences for Andrews' smallest parts partition 
function and 
new congruences for Dyson's rank},
Int. J. Number Theory
\textbf{6}
(2010), 
1--29. 
\hfill\\
URL:\,\url{http://dx.doi.org/10.1142/S179304211000296X}
\bibitem{Ko76}
N.~Koblitz, 
\textit{{$2$}-adic and {$3$}-adic ordinals of {$(1/j)$}-expansion 
coefficients for the weight {$2$} {E}isenstein series},
Bull. London Math. Soc.
\textbf{9} 
(1977),
188--192.
\hfill\\
URL:\,\url{http://http://www.ams.org/leavingmsn?url=http://dx.doi.org/10.1112/blms/9.2.188}
\bibitem{Le92}
R.~Lewis,
\textit{Relations between the rank and the crank modulo {$9$}},
J. London Math. Soc. (2)
\textbf{45} 
(1992),
222--231.
\hfill\\
URL:\,\url{http://dx.doi.org/10.1016/0097-3165(92)90101-Y}
\bibitem{MacM19}
P.~A.~MacMahon,
\textit{Divisors of numbers and their continuations in the theory of 
partitions},
Proc. London Math. Soc. Ser.2 \textbf{19} (1921), 75--113. 
\hfill\\
URL:\,\url{http://dx.doi.org/10.1112/plms/s2-19.1.75}
\bibitem{On10}
K.~Ono,        
\textit{Congruences for the Andrews spt-function},
Proc. Natl. Acad. Sci. USA,
to appear.
\hfill\\
URL:\,\url{http://mathcs.emory.edu/~ono/publications-cv/pdfs/132.pdf}
\bibitem{Pa10}
A.~E.~Patkowski,
\textit{A strange partition theorem related to the second 
Atkin-Garvan
moment},
preprint.
\bibitem{Sl-OEIS}
N.~J.~A.~Sloane,
``The On-Line Encyclopedia of Integer Sequences'', published electronically,
2010.
\hfill\\
URL:\,\url{http://oeis.org}
\bibitem{Sl-A036969}
N.~J.~A.~Sloane,
\textit{Sequence \texttt{A036969}}, in
``The On-Line Encyclopedia of Integer Sequences'',
published electronically, 2010.
URL:\,\url{http://oeis.org/A036969}
\end{thebibliography}

\end{document}